\documentclass[12pt]{amsart}

\usepackage{amssymb}
\usepackage{mathrsfs}

\newtheorem{thm}{Theorem}[section]
\newtheorem{prop}[thm]{Proposition}
\newtheorem{problem}[thm]{Problem}
\theoremstyle{remark}
\newtheorem{remark}[thm]{Remark}

\newcommand\Afrak{\mathfrak{A}}
\newcommand\Bfrak{\mathfrak{B}}
\newcommand\Cfrak{\mathfrak{C}}
\newcommand\Gfrak{\mathfrak{G}}
\newcommand\Xfrak{\mathfrak{X}}

\newcommand\efrak{\mathfrak{e}}

\newcommand\id{\operatorname{id}}
\newcommand\iso{\simeq}

\newcommand\Bscr{\mathscr{B}}
\newcommand\Cscr{\mathscr{C}}
\newcommand\Gscr{\mathscr{G}}
\newcommand\Mscr{\mathscr{M}}
\newcommand\Xscr{\mathscr{X}}
\newcommand\Yscr{\mathscr{Y}}
\newcommand\Fbb{\mathbb{F}}
\newcommand\Rbb{\mathbb{R}}
\newcommand\Qbb{\mathbb{Q}}
\newcommand\Zbb{\mathbb{Z}}

\newcommand\binomial[2]{\genfrac{(}{)}{0pt}{}{#1}{#2}}
\newcommand\cbinomial[2]{\genfrac{\langle}{\rangle}{0pt}{}{#1}{#2}}
\newcommand\Seq[1]{\langle #1 \rangle}
\newcommand{\Aut}{\operatorname{Aut}}
\newcommand\Fol{\textrm{F\o l}}
\newcommand{\symdif}{\triangle}
\newcommand{\C}{\mathtt{c}}
\newcommand\Age{\operatorname{Age}}

\title
{Amenability and Ramsey Theory}

\keywords{Amenable, extremely amenable,
F\o lner criterion, free group, invariant measurability, structural Ramsey theory}

\subjclass[2000]{05D10, 05C55, 20F38, 20F65, 43A07}

\thanks{
I would like to thank Louis Billera for a helpful discussion 
on balanced sets and for pointing me to the literature on the topic.
I would also like to thank Paul Larson, Lionel Nguyen van Th\'e, and Todor Tsankov for reading
drafts of this paper and offering comments.
The research presented in this paper was partially supported by
NSF grant DMS--0757507.
Any opinions, findings, and conclusions or recommendations expressed in this article
are those of the author
and do not necessarily reflect the views of the National Science Foundation. 
}

\author{Justin Tatch Moore}

\email{{\tt justin@math.cornell.edu}}

\address{Justin Moore \\
555 Malott Hall \\
Department of Mathematics \\
Cornell University \\
Ithaca, NY 14853-4201 }

\begin{document}

\begin{abstract}
The purpose of this article is to connect the notion of the amenability of
a discrete group with a new form of structural Ramsey theory.
The Ramsey theoretic reformulation of amenability constitutes
a considerable weakening of the F\o lner criterion.
As a by-product, it will be shown that in any non amenable group $G$,
there is a subset $E$ of $G$ such that no finitely additive probability measure
on $G$ measures all translates of $E$ equally.
\end{abstract}

\maketitle

\section{Introduction}

A group $G$ is \emph{amenable}\footnote{Throughout this paper,
the adjective ``left''
is implicit in the usage of \emph{action},
\emph{amenable}, \emph{F\o lner}, \emph{invariant},
and \emph{translation} unless otherwise stated.}
if there is a finitely additive, translation invariant probability measure
defined on all subsets of $G$.
This notion was isolated by von Neumann from the Banach-Tarski paradox.
Since then
it has played an important role in a diverse cross section of mathematics.
It has a large number of seemingly different equivalent formulations (see \cite{amenability:Paterson}, \cite{Banach-Tarski:Wagon});
two of the most celebrated are:
\begin{thm} \cite{alg_fass} \cite{card_alg}
A group $G$ is amenable if and only if there do
not exist elements $g_i$ $(i < k)$ of $G$ and  a partition of $G$ into sets
$A_i$ $(i < k)$ such that, for some $i_0 < i$,
both $\{g_i    A_i : i < i_0\}$ and $\{g_i    A_i : i_0 \leq i < k\}$ are partitions of $G$.
\end{thm}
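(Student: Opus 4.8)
The easy implication is that a paradoxical group is not amenable. Suppose $\mu$ were a finitely additive, translation-invariant probability measure on $\mathcal P(G)$ and $G=\bigsqcup_{i<k}A_i$ a partition as in the statement. Then translation invariance and finite additivity give $\sum_{i<i_0}\mu(A_i)=\sum_{i<i_0}\mu(g_iA_i)=\mu(G)=1$ and likewise $\sum_{i_0\le i<k}\mu(A_i)=1$; adding these and applying additivity once more yields $2=\sum_{i<k}\mu(A_i)=\mu(G)=1$, which is absurd. So the content of the theorem is the converse: a group that is \emph{not} paradoxical in this sense must be amenable, and for that I would reproduce Tarski's argument through the type semigroup.

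The plan is as follows. Form the type semigroup $\mathcal S$ of the left-translation action of $G$ on itself: its elements are the $G$-equidecomposability classes of those subsets of $G\times\omega$ that lie inside some $G\times n$, where two such sets count as equidecomposable when one can be cut into finitely many pieces and reassembled --- each piece moved by a single group element in the $G$-coordinate, levels permuted freely --- so as to form the other, and where addition is disjoint union after shifting the level sets apart. Let $u\in\mathcal S$ be the class of $G\times\{0\}$. Unwinding the definitions shows that $G$ is paradoxical in the sense of the statement precisely when $G\times\{0\}$ is equidecomposable with $(G\times\{0\})\sqcup(G\times\{1\})$, i.e.\ when $2u=u$; with more care --- using the Banach--Schr\"{o}der--Bernstein property of equidecomposability and the internal structure of $\mathcal S$ --- one checks that this is equivalent to the ostensibly weaker assertion that $(n+1)u=nu$ for some $n\ge1$. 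One then invokes Tarski's algebraic theorem on commutative monoids: an element $u$ with $(n+1)u\ne nu$ for every $n\ge1$ is the value $\nu(u)=1$ of some monoid homomorphism $\nu\colon\mathcal S\to[0,\infty]$. Restricting such a $\nu$ to the classes of subsets of $G\times\{0\}$, which are canonically identified with the subsets of $G$, produces a finitely additive, translation-invariant measure on $\mathcal P(G)$ assigning $G$ measure $1$, so $G$ is amenable.

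The principal obstacle is Tarski's algebraic theorem itself --- actually producing the homomorphism $\nu$. The natural first attempt is to define a ``density'' by $\nu(v)=\inf\{\,m/n:nv\le mu\,\}$, with $\inf\emptyset=\infty$: the hypothesis $(n+1)u\ne nu$ is exactly what keeps $\nu(u)$ from dropping below $1$, and subadditivity of this $\nu$ is a short counting argument, but promoting it to genuine additivity requires a further Hahn--Banach-type argument (or the transfinite extension procedure from Tarski's theory of cardinal algebras). Secondary, but still needing care, are the combinatorial verifications that $+$ is well defined and associative on $\mathcal S$, that equidecomposability obeys Banach--Schr\"{o}der--Bernstein, and that the various forms of ``paradoxical'' really do coincide; in particular the equivalence of $2u=u$ with the weaker statement that $(n+1)u=nu$ for some $n$ is the point at which the special nature of the type semigroup, as opposed to an arbitrary commutative monoid, genuinely enters.
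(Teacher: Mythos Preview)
The paper does not contain a proof of this statement: it is quoted in the introduction as a classical theorem of Tarski, with citations \cite{alg_fass} and \cite{card_alg}, and serves only as background for the new results. So there is no ``paper's own proof'' to compare against.

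That said, your outline is the standard route to Tarski's theorem and is correct in its broad strokes. The easy direction is exactly as you wrote. For the hard direction, the type-semigroup argument --- form $\mathcal S$, let $u=[G\times\{0\}]$, observe that a paradoxical decomposition of the stated shape is precisely the equation $2u=u$, and then invoke Tarski's monoid theorem that $(n+1)u\neq nu$ for all $n\ge 1$ yields a normalized monoid homomorphism $\nu\colon\mathcal S\to[0,\infty]$ --- is the proof one finds in Wagon's book \cite{Banach-Tarski:Wagon}. You have also identified the genuine sticking points: upgrading the subadditive density $\inf\{m/n:nv\le mu\}$ to an additive map (this is where a Hahn--Banach or transfinite-extension step enters), and the cancellation-type step that converts $(n+1)u=nu$ for some $n$ into $2u=u$, which relies on the Banach--Schr\"oder--Bernstein property of equidecomposability together with the specific structure of $\mathcal S$ rather than holding in an arbitrary commutative monoid. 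If you were to write this up in full, those two points are exactly where the real work lies; the rest is bookkeeping.
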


\begin{thm} \cite{Folner_crit} (see also \cite{Folner:Namioka})
A group $G$
is amenable if and only if for every finite $A \subseteq G$ and every $\epsilon > 0$
there is a finite $B \subseteq G$ such that (letting $\symdif$ denote symmetric difference)
\[
\sum_{a \in A} |(a    B) \symdif B| \leq \epsilon |B|
\]
\end{thm}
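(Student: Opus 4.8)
The final statement is F\o lner's criterion, and the plan is to prove the two implications separately; the direction doing the real work is ``amenable $\Rightarrow$ F\o lner''. Throughout I identify a finitely additive translation-invariant probability measure on $G$ with a left-invariant \emph{mean} on $\ell^\infty(G)$ (a positive linear functional $m$ with $m(\mathbf 1)=1$ satisfying $m(g\cdot f)=m(f)$, where $(g\cdot f)(x)=f(g^{-1}x)$), via $E\mapsto m(\mathbf 1_E)$; passing between these two forms is routine, and $\mathbf 1_{gE}=g\cdot\mathbf 1_E$ makes left-translation invariance correspond to left-invariance of $m$.

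\emph{F\o lner $\Rightarrow$ amenable.} For each pair $(A,\epsilon)$ with $A\subseteq G$ finite and $\epsilon>0$, choose a witnessing finite set $B_{A,\epsilon}$ and put $\mu_{A,\epsilon}=|B_{A,\epsilon}|^{-1}\mathbf 1_{B_{A,\epsilon}}\in\ell^1(G)$, a probability density, hence a mean. Direct the pairs by $(A,\epsilon)\preceq(A',\epsilon')$ iff $A\subseteq A'$ and $\epsilon\ge\epsilon'$, and by Banach--Alaoglu let $m$ be a weak$^*$ cluster point of the net $(\mu_{A,\epsilon})$ in the weak$^*$-compact set of means on $\ell^\infty(G)$; $m$ is again a mean. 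For $g\in G$ and $f\in\ell^\infty(G)$, a change of variables gives
\[ |\langle\mu_{A,\epsilon},g\cdot f\rangle-\langle\mu_{A,\epsilon},f\rangle|\le\|f\|_\infty\,|B_{A,\epsilon}|^{-1}\,|g^{-1}B_{A,\epsilon}\symdif B_{A,\epsilon}|, \]
and as soon as $g^{-1}\in A$ the right-hand side is at most $\epsilon\|f\|_\infty$ by the defining inequality for $B_{A,\epsilon}$. Since these pairs are cofinal in the net, passing to the cluster point yields $m(g\cdot f)=m(f)$; thus $m$ is a left-invariant mean and $G$ is amenable.

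\emph{Amenable $\Rightarrow$ F\o lner.} Let $m$ be an invariant mean. First, the point masses, and hence all probability densities $P\subseteq\ell^1(G)$, are weak$^*$-dense in the set of means: if not, Hahn--Banach separation in $(\ell^\infty(G)^*,\mathrm{weak}^*)$ produces $f\in\ell^\infty(G)$ and $c$ with $\langle\nu,f\rangle\le c$ for all $\nu\in P$ yet $m(f)>c$, contradicting $\sup_{\nu\in P}\langle\nu,f\rangle=\sup_x f(x)\ge m(f)$. So there is a net $\nu_\alpha\in P$ with $\nu_\alpha\to m$ weak$^*$, and invariance of $m$ forces $a\nu_\alpha-\nu_\alpha\to 0$ in the topology $\sigma(\ell^1(G),\ell^\infty(G))$ for each $a\in G$ (where $(a\nu)(x)=\nu(a^{-1}x)$). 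Fix a finite $A\subseteq G$ and $\epsilon>0$ and consider the convex set $\mathcal K=\{(a\nu-\nu)_{a\in A}:\nu\in P\}\subseteq\bigoplus_{a\in A}\ell^1(G)$. The previous limit shows $0$ lies in the weak closure of $\mathcal K$, so by Mazur's theorem $0$ lies in its norm closure; hence there is a probability density $\mu\in\ell^1(G)$ with $\sum_{a\in A}\|a\mu-\mu\|_1<\epsilon$.

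It remains to extract a finite F\o lner set from $\mu$ by the layer-cake decomposition. With $B_t=\{x:\mu(x)>t\}$ (finite), one has $\mu=\int_0^\infty\mathbf 1_{B_t}\,dt$ and, for nonnegative summable functions, $\|a\mu-\mu\|_1=\int_0^\infty|aB_t\symdif B_t|\,dt$. Summing over $a\in A$ and using $\int_0^\infty|B_t|\,dt=\|\mu\|_1=1$,
\[ \int_0^\infty\Big(\sum_{a\in A}|aB_t\symdif B_t|\Big)\,dt<\epsilon=\epsilon\int_0^\infty|B_t|\,dt, \]
so the integrable function $\epsilon|B_t|-\sum_{a\in A}|aB_t\symdif B_t|$ has positive integral and is therefore positive on a set of $t$ of positive Lebesgue measure; any such $t$ has $B_t\neq\emptyset$, and $B=B_t$ is the desired finite set. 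The substantive step is the passage from the invariant mean to Reiter's $\ell^1$-condition --- realizing the mean as a weak$^*$ limit of honest probability densities and converting weak clustering at $0$ into norm clustering by convexity (Day's trick via Mazur); the Banach--Alaoglu argument for the first implication and the measure-theoretic slicing are bookkeeping by comparison.
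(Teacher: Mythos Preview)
The paper does not supply a proof of this theorem: it is quoted as background, with attribution to F{\o}lner \cite{Folner_crit} and Namioka \cite{Folner:Namioka}, and the only trace of the argument in the paper is the remark in Section~\ref{Ramsey_function:sec} that an $\ell^1$ almost-invariant probability density contains an $\epsilon$-F{\o}lner set in its support (citing \cite{Folner:Namioka}). So there is no ``paper's own proof'' to compare against.

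Your argument is correct and is essentially Namioka's classical one. The direction F{\o}lner $\Rightarrow$ amenable via weak$^*$ clustering of normalized counting measures is standard and your estimate is right (noting $|aB\symdif B|=|a^{-1}B\symdif B|$ would let you avoid the awkward ``$g^{-1}\in A$'' bookkeeping, but what you wrote works). For amenable $\Rightarrow$ F{\o}lner you reproduce Day's trick: approximate the invariant mean weak$^*$ by $\ell^1$ probability densities, pass from weak to norm approximation of $0$ in $\bigoplus_{a\in A}\ell^1(G)$ via Mazur, and then slice $\mu$ by its level sets. The layer-cake identity $\|a\mu-\mu\|_1=\int_0^\infty|aB_t\symdif B_t|\,dt$ is exactly the step the paper attributes to \cite{Folner:Namioka}, and your averaging argument extracting a good level $t$ is the usual one. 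No gaps.
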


The set $B$ which satisfies the conclusion of this theorem is said to be \emph{$\epsilon$-F\o lner
with respect to $A$};
the assertion that such sets exist for each $\epsilon > 0$ is known as the \emph{F\o lner criterion}.

One of the main results of the present article is to formulate a weaker criterion for amenability
than the F\o lner criterion.
If $A$ is a set, let $P(A)$ denote the collection of all finitely additive probability measures on $A$.
If $G$ is a group, then the operation on $G$ is extended to
$\ell^1 (G)$ bilinearly:
\[
\mu    \nu (A) = \sum_{xy \in A} \mu(\{x\}) \nu (\{y\})
\]
(Here and throughout we identify $G$ with both a subset of $\ell^1(G)$
and a subset of $P(G)$ by regarding its elements as point masses.)
Observe that $g   \nu (E) = \nu (g^{-1} E)$.

If $A$ and $B$ are finite subsets of $G$ and $\epsilon > 0$, then
$B$ is \emph{$\epsilon$-Ramsey with respect to $A$} if whenever
$E \subseteq B$, there is a $\nu$ in $P(B)$ such that
\begin{itemize}

\item $P(A) \nu \subseteq P(B)$ and

\item $|\mu    \nu (E) - \mu'    \nu (E)| \leq \epsilon$
for all $\mu$ and $\mu'$ in $P(A)$.

\end{itemize}
Notice that one obtains an equivalent statement if
$\mu$ ranges over the elements of $A$
--- these are the extreme points of $P(A)$.
Also observe that if $\nu$ is a finitely supported probability measure on $G$, then
$P(A)    \nu$ can be regarded as a copy of $P(A)$.
Thus $B$ is $\epsilon$-Ramsey with respect to $A$
if whenever we induce a linear coloring
of $P(B)$ by assigning the values $0$ and $1$ to the elements of $B$, there is
a copy of $P(A)$ on which the coloring is $\epsilon$-monochromatic. 

\begin{thm} \label{amen_criterion}
Let $G$ be a group.
The following are equivalent:
\begin{enumerate}

\item \label{inv_measure:intro}
For every $E \subseteq G$ and every finite $A \subseteq G$, there is a $\mu$ in
$P(G)$ such that $\mu(g    E) = \mu (E)$ for all $g$ in $A$.

\item \label{ramsey:intro}
For every finite $A \subseteq G$, there is a $B$ which is $\frac{1}{2}$-Ramsey
with respect to $A$.

\item \label{0-ramsey:intro}
For every finite $A \subseteq G$, there is a $B$ which is $0$-Ramsey
with respect to $A$.

\item \label{amen:intro}
$G$ is amenable.

\end{enumerate}
\end{thm}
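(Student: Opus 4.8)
The plan is to run the cycle $(4)\Rightarrow(1)\Rightarrow(2)\Rightarrow(4)$ together with $(4)\Rightarrow(3)\Rightarrow(2)$. The implications $(4)\Rightarrow(1)$ and $(3)\Rightarrow(2)$ are immediate: an invariant mean measures every pair of translates equally, and a $0$-Ramsey set is a fortiori $\tfrac12$-Ramsey. So the work is in $(1)\Rightarrow(2)$, in $(4)\Rightarrow(3)$, and above all in $(2)\Rightarrow(4)$. One preliminary remark streamlines the handling of $(1)$: since $P(G)$ is weak-$*$ compact (it is the state space of $\ell^\infty(G)$) and, for fixed $E$, the sets $\{\mu\in P(G):\mu(gE)=\mu(E)\text{ for all }g\in A\}$ are weak-$*$ closed and shrink as the finite set $A$ grows, condition $(1)$ is equivalent to the statement that every $E\subseteq G$ is measured equally on \emph{all} of its translates by some $\mu\in P(G)$. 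In particular the ``by-product'' advertised in the abstract is exactly the contrapositive of $(1)\Rightarrow(4)$, which we obtain through $(1)\Rightarrow(2)\Rightarrow(4)$.

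For $(1)\Rightarrow(2)$ I would argue contrapositively. Suppose $(2)$ fails, and fix a finite $A$ such that every finite $B$ carries a coloring $E_B\subseteq B$ for which no $\nu$ with $A\cdot\operatorname{supp}\nu\subseteq B$ has all the values $\nu(g^{-1}E_B)$, $g\in A$, within $\tfrac12$ of one another. Pass to an ultrafilter $\mathcal U$ on the finite subsets of $G$ with $\{B:F\subseteq B\}\in\mathcal U$ for every finite $F$, and let $E$ be the $\mathcal U$-limit of the $E_B$. Then \emph{every} finitely supported $\nu\in P(G)$ has two of the values $\nu(g^{-1}E)$ differing by at least $\tfrac12$: for $\nu$ with support $S$ all $B\supseteq S\cup AS$ are eligible, the offending pair in $A$ can be frozen along $\mathcal U$, and for each of the finitely many $s\in S$ the membership $gs\in E_B$ agrees with $gs\in E$ for $\mathcal U$-many $B$. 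But $(1)$, applied to $E$ and $A\cup A^{-1}$, yields $\mu\in P(G)$ with $\mu(g^{-1}E)=\mu(E)$ for all $g\in A$; approximating $\mu$ weak-$*$ by a finitely supported measure $\nu$ puts every $\nu(g^{-1}E)$ within $\tfrac14$ of $\mu(E)$, hence within $\tfrac12$ of one another, a contradiction.

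The implication $(4)\Rightarrow(3)$ I would deduce from the F\o lner criterion. Given a finite $A$ with $1\in A$, a set $F$ that is sufficiently F\o lner with respect to an appropriate power of $A$ gives, for any coloring $E\subseteq B$ (with $B=A\cdot F'$ for an $F'$ assembled from translated copies of $F$) and $\nu$ uniform on a sub-window of $F$, values $\nu(g^{-1}E)$ that agree up to $o(1)$; this already yields $\tfrac12$-Ramsey. To obtain the sharp conclusion --- a $\nu$ with $\nu(g^{-1}E)$ \emph{literally constant} in $g\in A$ and $A\cdot\operatorname{supp}\nu\subseteq B$ --- I would pass from ``approximately equalizing'' to ``exactly equalizing'' by a combinatorial averaging step: among enough translated copies of $F$ one extracts a sub-collection whose characteristic vectors form a balanced set system in the linear-programming sense, and averaging the associated point masses against the balancing weights produces the required $\nu$. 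This is where the theory of balanced sets enters.

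Finally, $(2)\Rightarrow(4)$ is the crux, and I expect it to be the \textbf{main obstacle}. Arguing contrapositively, one assumes $G$ non-amenable and must produce a finite $A$ admitting no $\tfrac12$-Ramsey set whatsoever. The paradoxicality criterion recalled above provides a paradoxical decomposition of $G$, from which (after a refinement arranging the pieces to be left-translates of one set $E_0$) one extracts a finite $A$ and real coefficients $c_g$ with $\sum_g c_g(\chi_{gE_0}-\chi_{E_0})$ identically constant on $G$. The temptation is to conclude at once that no $\nu$ equalizes the translates of $E_0\cap B$ --- but such a signed-sum argument only refutes the \emph{strong} condition $(1)$: a relation $\sum_g c_g a_g=\text{const}$ with the $a_g$ confined to an interval of length $\tfrac12$ is satisfiable as soon as there are several terms, so it does not rule out $\tfrac12$-Ramsey. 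To refute the genuinely weaker condition $(2)$ one must instead show that, for a carefully chosen coloring of $B$, the polytope $\{\nu\in P(B):P(A)\nu\subseteq P(B)\text{ and all }\nu(g^{-1}E)\ (g\in A)\text{ lie within }\tfrac12\}$ is \emph{empty}. Emptiness is a separation problem in $\mathbb R^A$ whose dual side is again a balanced-set computation; constructing the coloring of $B$ so that the separating functional actually certifies emptiness --- and doing so uniformly over all finite $B$ --- is the heart of the argument and the step I anticipate will demand the most care.
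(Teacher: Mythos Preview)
Your $(1)\Rightarrow(2)$ is fine and matches the paper's argument (they use K\"onig's lemma where you use an ultrafilter, but the idea is the same). The real gap is in $(2)\Rightarrow(4)$, where you have misidentified both the direction of the argument and where the difficulty lies.

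You propose to prove $(2)\Rightarrow(4)$ contrapositively: assume $G$ is non-amenable and manufacture, for some finite $A$, a coloring of every $B$ that defeats $\tfrac12$-Ramsey. You yourself flag that a paradoxical decomposition only refutes the stronger condition $(1)$ and that pushing it to refute $(2)$ is delicate; indeed you do not carry it out. The paper avoids this entirely by arguing \emph{forward}: from $(2)$ one bootstraps to amenability. The missing idea is an amplification step. First, a $\tfrac12$-Ramsey set $B$ for $A$ is automatically ``$\tfrac34$-Ramsey for $[0,1]$-valued functions'': given $f:B\to[0,1]$, set $E=\{f\ge\tfrac12\}$, take the $\nu$ guaranteed by $\tfrac12$-Ramsey for $E$, and observe that $|g\nu(f)-g'\nu(f)|\le\tfrac12|g\nu(E)-g'\nu(E)|+|g\nu(f-\tfrac12\chi_E)-g'\nu(f-\tfrac12\chi_E)|\le\tfrac14+\tfrac12$. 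Second, this $q=\tfrac34$ statement iterates: choosing a chain $A=B_0\subseteq B_1\subseteq\cdots\subseteq B_n$ where each $B_{i+1}$ is $\tfrac34$-Ramsey for $B_i$, one builds $\nu=\nu_0\cdots\nu_{n-1}$ so that $|g\nu(f)-g'\nu(f)|\le q^n$ for $g,g'\in A$. Once one has $\epsilon$-Ramsey for functions (for all $\epsilon$, and then for finitely many functions at once by a further nested iteration), amenability follows by a routine Hahn--Banach/compactness argument. No paradoxical decomposition is needed, and $(2)\Rightarrow(4)$ is in fact the most constructive part of the proof rather than the hardest.

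Your sketch of $(4)\Rightarrow(3)$ is also more complicated than necessary. The paper's route is: once $(4)$ (hence $\epsilon$-Ramsey for every $\epsilon$) is in hand, fix $A$ and note there are only finitely many families $\Yscr$ of subsets of $A$, so there is an $\epsilon>0$ such that any $\epsilon$-balanced family is $0$-balanced. Any $B$ that is $\epsilon$-Ramsey for $A$ is then already $0$-Ramsey: a coloring $E\subseteq B$ produces an $\epsilon$-balanced family $\{X_E(g):Ag\subseteq B\}$, hence a $0$-balanced one, and the balancing weights give the exact $\nu$. There is no need to assemble translated copies of a F\o lner set.
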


The equivalence of (\ref{inv_measure:intro})
and (\ref{amen:intro}) was unexpected and while they are purely global statements
about $G$ involving its typically infinite subsets,
the proof crucially employs the finitary interpolation provided by
(\ref{ramsey:intro}).
Also notice that the only examples of
$0$-F\o lner sets are the trivial ones:
a finite group is a $0$-F\o lner set in itself.
Thus (\ref{0-ramsey:intro}) represents a new phenomenon
for which the F\o lner criterion
provides no analog.
The proof of Theorem \ref{amen_criterion} will also provide a quantitative relationship between
$\epsilon$-Ramsey sets and $\epsilon$-F\o lner sets;
this is the content of Section \ref{Ramsey_function:sec}.

Let us say that a subset $E$ of a group $G$ is \emph{invariantly measurable}
if there is a $\mu$ in $P(G)$ such that $\mu (g    E) = \mu (E)$ for all $g$ in $G$.
That the invariant measurability of sets can be witnessed by a single measure turns out
\emph{not} to be a phenomenon present in arbitrary groups.

\begin{thm} \label{inv_meas_c.e.}
There are five invariantly measurable subsets of $\Fbb_2$
which can not be simultaneously measured invariantly.
\end{thm}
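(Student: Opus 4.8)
The plan is to imitate the classical paradoxical decomposition of $\Fbb_2$, but with pieces that are individually invariantly measurable. Write $W(x)$ for the set of reduced words beginning with the letter $x\in\{a,a^{-1},b,b^{-1}\}$. Then $\Fbb_2 = W(a)\sqcup W(a^{-1})\sqcup W(b)\sqcup W(b^{-1})\sqcup\{e\}$, while at the same time $\Fbb_2 = W(a)\sqcup aW(a^{-1})$ and $\Fbb_2 = W(b)\sqcup bW(b^{-1})$. If some $\mu\in P(\Fbb_2)$ measured all five of these sets invariantly, the last two decompositions (together with $\mu(aW(a^{-1})) = \mu(W(a^{-1}))$ and $\mu(bW(b^{-1})) = \mu(W(b^{-1}))$) would give $\mu(W(a)) + \mu(W(a^{-1})) = 1 = \mu(W(b)) + \mu(W(b^{-1}))$, whereas the first gives $\mu(W(a)) + \mu(W(a^{-1})) + \mu(W(b)) + \mu(W(b^{-1})) + \mu(\{e\}) = 1$; subtracting forces $\mu(\{e\}) = -1$. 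So it suffices to replace $W(a),W(a^{-1}),W(b),W(b^{-1})$ and $\{e\}$ by five genuinely invariantly measurable sets that still obey one five-fold partition and two two-fold ``doubling'' partitions of this shape, and then run the same arithmetic.

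The obstruction is that the sets $W(x)$ are not themselves invariantly measurable. Indeed, if $\mu(gW(a)) = \mu(W(a))$ for every $g$, then taking $g_1=ab$, $g_2=abab$, $g_3=aba$ — so that $g_1W(a),g_2W(a),g_3W(a)$ are the reduced words beginning with $aba$, $ababa$, $aba^2$ respectively, and $g_2W(a)\sqcup g_3W(a)\subseteq g_1W(a)$ — forces $\mu(W(a))\ge 2\mu(W(a))$, hence $\mu(W(a))=0$; but then $a^{-1}W(a) = \Fbb_2\setminus W(a^{-1})$ and $ba^{-1}W(a) = \Fbb_2\setminus bW(a^{-1})$ give $\mu(W(a^{-1})) = \mu(bW(a^{-1})) = 1$, contradicting $W(a^{-1})\cap bW(a^{-1}) = \emptyset$. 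Each replacement set must therefore be ``spread out'' rather than a half of the Cayley tree. The source of invariantly measurable sets I would use is this: if $C=\langle c\rangle$ is infinite cyclic, $m$ is a shift-invariant mean on $C\cong\Zbb$, and $E\subseteq\Fbb_2$ meets every left coset $xC$ in a set whose $m$-average (through the parametrisation $n\mapsto xc^n$) does not depend on the coset, then the push-forward of $m$ along $n\mapsto c^n$ is a mean witnessing that $E$ is invariantly measurable; left translates and complements of such $E$ are again of this kind, and for a transversal $\mathcal C$ of the left $\langle a\rangle$-cosets one obtains invariantly measurable sets $\bigcup_{x\in\mathcal C}\{xa^n : n\in P\}$ of measure $m(P)$, and similarly for $b$.

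The idea, then, is to build $E_a,E_{a^{-1}}$ out of the $\langle a\rangle$-coset geometry and $E_b,E_{b^{-1}}$ out of the $\langle b\rangle$-coset geometry, together with a fifth invariantly measurable set $E_5$, so that $E_a\sqcup aE_{a^{-1}} = \Fbb_2$, $E_b\sqcup bE_{b^{-1}} = \Fbb_2$, and $E_a,E_{a^{-1}},E_b,E_{b^{-1}},E_5$ partition $\Fbb_2$; the arithmetic of the first paragraph then forces $\mu(E_5)=-1$ for any common witness $\mu$. I expect the main obstacle to be reconciling the two demands placed on these sets: invariant measurability pushes each $E_x$ to be essentially a union of cosets of its cyclic subgroup, while the doubling relations are modelled on the half-tree sets $W(x)$, which are emphatically not of that form — so a naive ``spread out the $W(x)$'' attempt does not work. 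One must instead choose the two transversals and the slices $P$ in a coordinated way, using that left multiplication by $a$ fixes the coset $\langle a\rangle$ and acts freely on the remaining $\langle a\rangle$-cosets (and likewise for $b$), so that $E_a,E_{a^{-1}}$ knit together with $E_b,E_{b^{-1}}$ along the thin set where the $a$- and $b$-geometries disagree, with this thin set itself arranged to be invariantly measurable — which is, I believe, where a fifth set rather than four is forced in.
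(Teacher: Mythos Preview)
Your proposal is a plan, not a proof: the five sets are never actually constructed, and you explicitly flag the central difficulty without resolving it. The constraints you have set up are in fact quite rigid. From the partition $E_a\sqcup E_{a^{-1}}\sqcup E_b\sqcup E_{b^{-1}}\sqcup E_5=\Fbb_2$ together with $E_{a^{-1}}=a^{-1}(\Fbb_2\setminus E_a)$ and $E_{b^{-1}}=b^{-1}(\Fbb_2\setminus E_b)$, disjointness of $E_a$ from $E_{a^{-1}}$ forces $aE_a\subseteq E_a$; disjointness of $E_a$ from $E_{b^{-1}}$ forces $bE_a\subseteq E_b$; and symmetrically $bE_b\subseteq E_b$ and $aE_b\subseteq E_a$. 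So $E_a\cup E_b$ must be closed under left multiplication by both $a$ and $b$, with $E_a$ absorbing all of the $a$-action and $E_b$ all of the $b$-action, while $E_a\cap E_b=\emptyset$. Your ``uniform slice of $\langle a\rangle$-cosets'' template for $E_a$ does not obviously accommodate $aE_a\subseteq E_a$ together with $bE_a\subseteq E_b$ for an $E_b$ built from $\langle b\rangle$-cosets in the analogous way; nothing in the sketch shows how to reconcile these, and until the sets are exhibited there is no proof.

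For contrast, the paper's argument abandons the paradoxical-decomposition template entirely. It uses the homomorphism $h:\Fbb_2\to\Zbb$ with $h(a)=1$, $h(b)=-1$, sets $Z=h^{-1}((0,\infty))$ and $A=W(a)\cup W(a^{-1})$, and takes the five sets to be $Z$, $X=A\cup Z^{\C}$, $X'=A\cap Z$, $Y=A^{\C}\cup Z$, $Y'=A^{\C}\cap Z^{\C}$. Each is invariantly measurable because it is trapped between $Z$ and its translates (or between $Z^{\C}$ and its translates), and any ultrafilter extending the nested family $\{h^{-1}((k,\infty))\}_k$ (respectively its complements) gives a $\{0,1\}$-valued witness. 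The contradiction for a common witness $\mu$ is not a doubling identity but a squeeze: the thin sets $X'$ and $Y'$ have infinitely many disjoint translates, hence $\mu$-measure $0$; since $A\symdif Z^{\C}=X'\cup Y'$, this makes $A$ and $Z^{\C}$ agree up to $\mu$-null sets, and then the disjoint families $\{b^kA\}_k$ and $\{a^kA^{\C}\}_k$ force $\mu(Z^{\C})=\mu(Z)=0$. This route sidesteps exactly the reconciliation problem that stalls your approach.
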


Theorem \ref{amen_criterion}
tells us that if a group $G$ is not amenable,
then there is a single set $E \subseteq G$ which can not be measured invariantly.
It is natural to ask to what extent $E$ can be specified by a finite amount of information.
Let $A$ be a fixed finite subset of $G$.
If $E \subseteq G$, define
\[
X^A_{E} (g) = \{a \in A : ag \in E\}
\]
\[
\Xscr^A_E = \{X_E(g) : g \in G\}.
\]
(When $A$ is clear from the context, the superscript will be suppressed.)
Thus if $A$ is a ball about the identity, $X_E (g)$ is a ``picture'' of $E$ centered at $g$
where the scope of the image is specified by $A$.
The set $\Xscr_E$ is then the collection of all such pictures of $E$ taken from different vantage points in $G$.
If $\Yscr$ is a collection of subsets of $A$, then we say that $\Yscr$ is \emph{realized} in $G$ if
$\Yscr = \Xscr_E$ for some $E \subseteq G$.

A collection $\Yscr$ of subsets of a finite set $A$ is \emph{$\epsilon$-balanced} if there is a convex combination $v$
of  the characteristic functions of its elements such that $\max (v) - \min (v) \leq \epsilon$.
\emph{Balanced} will be taken to mean $0$-balanced.
It follows from the Hahn-Banach Separation Theorem that a collection $\Yscr$ is
unbalanced if and only if there is an $f:A \to \Rbb$ such that
$\sum_{a \in A} f(a) = 0$ and
for every $Y$ in $\Yscr$,
$\sum_{a \in Y} f(a) > 0$.

We will prove the following analog of Theorem 4.2 of \cite{tiling_curv_amen}.

\begin{thm} \label{puzzle}
For a group $G$, the following are equivalent:
\begin{enumerate}

\item \label{non_amen:intro}
$G$ is non amenable.

\item \label{unbalanced:intro}
There is a finite $A \subseteq G$
and an unbalanced collection $\Yscr$ of subsets of $A$ which is realized in $G$.

\item \label{no_0-ramsey:intro}
There is a finite $A \subseteq G$ for which there is no finite $B$
which is $0$-Ramsey with respect to $A$.

\end{enumerate}
\end{thm}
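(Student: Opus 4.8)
The plan is to derive the theorem from Theorem \ref{amen_criterion} together with a translation between the combinatorial property of being balanced and the measure-theoretic property of equalizing a finite family of translates. The equivalence of (\ref{non_amen:intro}) and (\ref{no_0-ramsey:intro}) requires no work: it is precisely the negation of the equivalence of items (\ref{0-ramsey:intro}) and (\ref{amen:intro}) of Theorem \ref{amen_criterion}. Thus the content lies in the equivalence of (\ref{non_amen:intro}) and (\ref{unbalanced:intro}), and the key step is the following observation, valid for any group $G$, any finite $A \subseteq G$, and any $E \subseteq G$: the collection $\Xscr^A_E$ is balanced if and only if there is a $\mu$ in $P(G)$ for which $\mu(a^{-1} E)$ is independent of $a \in A$. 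The point is that $a \in X^A_E(g)$ holds exactly when $g \in a^{-1}E$, so that the tuple $(\mu(a^{-1}E))_{a \in A} \in \Rbb^A$ is the $\mu$-average of the simple function $g \mapsto \mathbf{1}_{X^A_E(g)}$.

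I would prove this observation by checking both directions directly. If $v = \sum_Y p_Y \mathbf{1}_Y$ is a constant convex combination, with $Y$ ranging over the finite set $\Xscr^A_E$, pick for each $Y$ with $p_Y > 0$ some $g_Y$ with $X^A_E(g_Y) = Y$ and set $\mu = \sum_Y p_Y \delta_{g_Y} \in P(G)$; then $\mu(a^{-1}E) = \sum_Y p_Y \mathbf{1}[a \in Y] = v(a)$, which is independent of $a$. Conversely, given such a $\mu$, the level sets $\{g : X^A_E(g) = Y\}$, as $Y$ ranges over the finite set $\Xscr^A_E$, partition $G$; letting $p_Y$ be their $\mu$-measures, one has $(\mu(a^{-1}E))_{a \in A} = \sum_Y p_Y \mathbf{1}_Y$, a finite convex combination of characteristic functions of members of $\Xscr^A_E$, which is constant by hypothesis. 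Hence $\Xscr^A_E$ is balanced. Note that only finite additivity of $\mu$ is used, all functions in sight being simple.

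Granting this, (\ref{unbalanced:intro}) implies (\ref{non_amen:intro}) by contraposition: if $G$ is amenable, fix a translation invariant $\mu \in P(G)$; then $\mu(a^{-1}E) = \mu(E)$ for every $a$ and every $E$, so by the observation every collection realized in $G$ is balanced. For the converse, assume $G$ is non amenable. By the equivalence of (\ref{inv_measure:intro}) and (\ref{amen:intro}) in Theorem \ref{amen_criterion}, there exist $E_0 \subseteq G$ and a finite $A_0 \subseteq G$, which we may enlarge so as to contain the identity, such that no $\mu \in P(G)$ satisfies $\mu(g E_0) = \mu(E_0)$ for all $g \in A_0$. Set $A = \{g^{-1} : g \in A_0\}$; since $1 \in A_0$ we have $1 \in A$, and the demand that $\mu(a^{-1}E_0)$ be independent of $a \in A$ is exactly the demand that $\mu(g E_0) = \mu(E_0)$ for all $g \in A_0$, which is unsatisfiable. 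By the observation, $\Xscr^A_{E_0}$ is unbalanced; it is realized in $G$ by definition, giving (\ref{unbalanced:intro}).

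I do not expect a serious obstacle, since the essential content is already packaged in Theorem \ref{amen_criterion} and this result is a reformulation of its item (\ref{inv_measure:intro}). The only points needing care are the bookkeeping that passes between $a$ and $a^{-1}$ and between ``equal to $\mu(E)$'' and ``independent of $a \in A$'' (handled by putting the identity into $A_0$, after noting that enlarging $A$ only preserves unbalancedness), and the verification that, for a finitely additive $\mu$, the average of a simple $\Rbb^A$-valued function is genuinely a finite convex combination of its values. One could instead certify unbalancedness by the function $f : A \to \Rbb$ furnished by the Hahn-Banach criterion stated before the theorem, which exhibits directly why no equalizing $\mu$ can exist---$\int \bigl(\sum_{a} f(a) \mathbf{1}_{a^{-1}E}\bigr)\, d\mu$ would have to be both $0$ and at least a fixed positive constant---but this reformulation is not needed above.
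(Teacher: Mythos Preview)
Your treatment of (\ref{non_amen:intro})$\Leftrightarrow$(\ref{unbalanced:intro}) is correct and is essentially the paper's: your ``observation'' is the $\epsilon = 0$ case of Propositions \ref{balanced_to_measure} and \ref{measure_to_balanced}, and the passage through item (\ref{inv_measure:intro}) of Theorem \ref{amen_criterion} matches the paper's route exactly.

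The gap is in your handling of (\ref{no_0-ramsey:intro})$\Rightarrow$(\ref{non_amen:intro}). You invoke the equivalence (\ref{0-ramsey:intro})$\Leftrightarrow$(\ref{amen:intro}) of Theorem \ref{amen_criterion}, but in the paper's logical structure only half of that equivalence is available at this point. Section 2 (Theorem \ref{amen_criterion2}) explicitly defers the $0$-Ramsey clause; while ``$0$-Ramsey for every $A$ $\Rightarrow$ amenable'' follows trivially from what is proved there (since $0$-Ramsey implies $\tfrac{1}{2}$-Ramsey), the converse ``amenable $\Rightarrow$ $0$-Ramsey for every $A$'' is established precisely \emph{here}, as the contrapositive of (\ref{no_0-ramsey:intro})$\Rightarrow$(\ref{non_amen:intro}). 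So for that direction your citation is circular.

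The missing argument is a genuine, if short, finiteness step that uses the balanced machinery in an essential way. Since there are only finitely many families of subsets of $A$, one may choose $\epsilon > 0$ small enough that any $\epsilon$-balanced family of subsets of $A$ is in fact $\epsilon'$-balanced for every $\epsilon' > 0$. Taking $B$ to be $\epsilon$-Ramsey with respect to $A$ (this much \emph{is} available from amenability via Theorem \ref{amen_criterion2}), Propositions \ref{measure_to_balanced} and \ref{balanced_to_measure} then upgrade $B$ to $\epsilon'$-Ramsey for every $\epsilon' > 0$, hence $0$-Ramsey by compactness. This is the content you have skipped; note in particular that it is the only place in the paper where the existence of $0$-Ramsey sets---a phenomenon with no F{\o}lner analogue---is actually established.
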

Thus in order to establish the non amenability of a group, it
is sufficient to realize a subcollection of
\[
\Yscr_f = \{Y \subseteq A : \sum_{a \in Y} f(a) > 0\}
\]
for some $f:A \to \Rbb$ such that $\sum_{a \in A} f(a) = 0$.
Balanced sets have been studied in game theory
(e.g. \cite{ind_method_min_balanced}),
although the focus has been on minimal balanced collections
rather than the maximal unbalanced
collections, which are most relevant to the present discussion.

The above theorems concern the amenability of discrete groups.
The amenability of a topological group $G$ can be formulated as follows:
whenever $G$ acts continuously on a compact space $K$,
$K$ supports an\ Borel probability measure which is preserved by the action of $G$.
A strengthening of amenability in this context is that of \emph{extreme amenability}:
every continuous action of $G$ on a compact space has a fixed point.
In \cite{Fraisse_lim_Ramsey}, Kechris, Pestov, and Todorcevic discovered a very general correspondence
which equates the extreme amenability of the automorphism group of an ordered Fra\"iss\'e structure
with the Ramsey Property of its finite substructures.
\begin{thm} \cite{Fraisse_lim_Ramsey}
Let $G$ be a closed subgroup of $S_\infty$.
The following are equivalent:
\begin{enumerate}

\item $G$ is extremely amenable.

\item $G = \Aut (\Gfrak)$ where $\Gfrak$ is a Fra\"iss\'e structure with
an order relation and the finite substructures of $\Gfrak$ have the Ramsey Property.

\end{enumerate}
\end{thm}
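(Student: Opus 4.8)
\emph{Proof proposal.} The plan is to build a dictionary between the topological dynamics of $G$ and the finite combinatorics of the structures on which it acts, and then read off each implication. Identify the underlying set of $\Gfrak$ with $\mathbb{N}$, so that $G \le S_\infty$ is closed and the pointwise stabilizers $V_n = \{g \in G : g(i)=i \text{ for all } i<n\}$ form a neighborhood basis at the identity. The engine is the following concrete reformulation of extreme amenability, which I would extract from the description of $G$-flows via finite-valued continuous colorings: $G = \Aut(\Gfrak)$ is extremely amenable if and only if $\Gfrak$ is \emph{Ramsey}, meaning that for all finite $\Abf \subseteq \Bbf$ in $\Age(\Gfrak)$, every finite coloring of the copies of $\Abf$ in $\Gfrak$ admits a copy of $\Bbf$ in $\Gfrak$ inside which all copies of $\Abf$ receive a single color. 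The role of the order is to force every finite substructure of $\Gfrak$ to be rigid, so that a copy of $\Abf$ determines, and is determined by, a unique embedding; this is what lets ``colorings of copies'' and ``colorings of embeddings'' (the latter being exactly the continuous finite-valued functions on $G$) be identified.

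To prove the reformulation, note first that a continuous coloring $c : G \to \{1,\dots,k\}$ is right-invariant under some $V_n$, hence factors through $g \mapsto g\upharpoonright\{0,\dots,n-1\}$; writing $\Abf = \Gfrak\upharpoonright\{0,\dots,n-1\}$ and using rigidity, $c$ is exactly a $k$-coloring of the copies of $\Abf$ in $\Gfrak$, the copy $g(\{0,\dots,n-1\})$ receiving the color $c(g)$ (well defined because $\Abf$ is rigid). Now fix $\Bbf = \Gfrak\upharpoonright\{0,\dots,N-1\} \supseteq \Abf$ and let $F \subseteq G$ be a finite set whose restrictions exhaust the embeddings of $\Abf$ into $\Bbf$. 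For $g \in G$ the copies of $\Abf$ inside the copy $g(\Bbf)$ are the sets $(gs)(\{0,\dots,n-1\})$, $s \in F$, so ``$c$ is constant on $gF$'' is precisely ``$g(\Bbf)$ is a monochromatic copy of $\Bbf$.'' The existence of such a $g$, for every $c,\Abf,\Bbf$, is the fixed-point content of extreme amenability: a constant function lies in the orbit closure $\overline{G\cdot c}$ inside the shift $\{1,\dots,k\}^{G}$ exactly when such $g$ exist for every finite $F$. This establishes ``extremely amenable $\iff$ $\Gfrak$ Ramsey.''

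It remains to connect ``$\Gfrak$ Ramsey'' with the Ramsey property of the finite class $\Age(\Gfrak)$, and, in the direction (1) $\Rightarrow$ (2), to produce the ordered structure in the first place. For (2) $\Rightarrow$ (1): given a coloring of copies of $\Abf$ in $\Gfrak$ and a target $\Bbf$, choose $\Cbf \in \Age(\Gfrak)$ with $\Cbf \to (\Bbf)^{\Abf}_k$ (the Ramsey property of the class), embed $\Cbf$ into $\Gfrak$ using $\Cbf \in \Age(\Gfrak)$, and apply the partition relation inside that copy to obtain a monochromatic $\Bbf$; this gives ``$\Gfrak$ Ramsey,'' hence extreme amenability. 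For (1) $\Rightarrow$ (2): first write $G = \Aut(\Mscr)$ for a Fra\"iss\'e structure $\Mscr$ by homogenization, adjoining a relation symbol for each orbit of $G$ on finite tuples so that $\Mscr$ is ultrahomogeneous with $\Aut(\Mscr)=G$ and $\Age(\Mscr)$ is a Fra\"iss\'e class; next, since $G$ acts continuously on the compact space $\operatorname{LO}(\Mscr)$ of linear orderings of $\mathbb{N}$, extreme amenability furnishes a $G$-invariant order $<$, and $\Gfrak = (\Mscr, <)$ is an ordered Fra\"iss\'e structure with $\Aut(\Gfrak) = G$. Finally, extreme amenability gives ``$\Gfrak$ Ramsey,'' and a compactness argument (K\"onig's lemma, or an ultraproduct) using that $\Age(\Gfrak)$ is hereditary and has amalgamation upgrades this to a single finite witness of $\Cbf \to (\Bbf)^{\Abf}_k$, i.e.\ the Ramsey property of $\Age(\Gfrak)$.

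The principal obstacle is the translation lemma underlying the second paragraph: one must pin down precisely how the pair $(n,F)$ encodes finite structures $\Abf \subseteq \Bbf$ together with a family of copies, and verify that ``oscillation at most $\epsilon$ on $gF$'' for a two-valued $c$ is literally ``monochromatic copy of $\Bbf$.'' The bookkeeping is delicate because the objects being colored are embeddings of $\Abf$, not isomorphism types, and it is exactly the linear order, forcing every finite substructure to be rigid, that allows embeddings and copies to be identified; without rigidity the fixed-point property survives only modulo the compact groups of local symmetries, and the clean equivalence breaks. The second point requiring care is the compactness step in (1) $\Rightarrow$ (2), where an a priori infinitary consequence of the fixed-point property must be compressed into a single finite structure $\Cbf$; here one leans on the finiteness of each relevant substructure and on amalgamation within $\Age(\Gfrak)$.
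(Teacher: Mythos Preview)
The paper does not contain a proof of this statement. The theorem is quoted in the introduction with the citation \cite{Fraisse_lim_Ramsey} and is used only as motivation; the paper's own contribution is the \emph{analog} for amenability (Theorem in Section~\ref{KPT_section}), which replaces extreme amenability by amenability and the Ramsey Property by the convex Ramsey property. So there is no ``paper's own proof'' against which to compare your proposal.

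That said, your sketch is a faithful outline of the Kechris--Pestov--Todorcevic argument: the identification of basic clopen sets in $G$ with embeddings of finite substructures, the use of the order to guarantee rigidity so that embeddings and copies coincide, the reduction of extreme amenability to the statement that every continuous finite coloring of $G$ has arbitrarily large monochromatic translates of finite sets, and the compactness passage from the infinitary Ramsey statement to the finitary class property. One small point worth tightening: in your direction (1)$\Rightarrow$(2), after you produce a $G$-invariant linear order $<$ on $\Mscr$, you should verify that $(\Mscr,<)$ is still ultrahomogeneous and that $\Age(\Mscr,<)$ has the joint embedding and amalgamation properties, since adjoining an invariant relation does not automatically preserve these; in the KPT setting this follows because $<$ is definable from the $G$-orbit structure you already added, but it deserves a sentence.
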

At the time of \cite{Fraisse_lim_Ramsey}, it was unclear whether
there was an analogous connection between amenability and
Ramsey theory.
In Section \ref{KPT_section} it will be shown that such an analogous result does exist.

The notation will be mostly standard.
Following a set-theoretic convention, I will sometimes abbreviate
$\{0,\ldots,k-1\}$ with $k$.
The set of natural numbers is taken to include $0$ and all counting will begin at $0$.
The letters $i,j,k,l,m,n$ will be used to denote natural numbers unless otherwise
stated.


\section{A Ramsey theoretic criterion for amenability}

In this section we will prove most of Theorem \ref{amen_criterion},
deferring the equivalence of amenability with
(\ref{0-ramsey:intro}) to the next section.
Before we begin, it will be necessary to extend the evaluation
map $(\nu,E) \mapsto \nu(E)$ to a bilinear map on $P(G) \times \ell^\infty (G)$ by
integration.
We will only need this for finitely supported $\nu$ in which case
\[
\nu (f) = \sum_{g \in B} \nu (\{g\}) f(g)
\]
We will define $f(\nu) = \nu (f)$.
Observe that the map $(\nu,f) \mapsto \nu (f)$ is bilinear.

When proving the theorem, it will be natural to further divide the
task as follows.

\begin{thm} \label{amen_criterion2}
Let $G$ be a group and $H \subseteq G$ be closed under the operation
and contains the identity of $G$.
The following are equivalent:
\begin{enumerate}

\item \label{single_amen}
For every $E \subseteq G$ and every finite $A \subseteq H$,
there is a $\mu$ in $P(H)$ such that
$\mu(g^{-1} E) = \mu(E)$ for every $g \in A$.

\item \label{ramsey}
For every finite $A \subseteq H$, there is a finite $B \subseteq H$ such that $B$
is $\frac{1}{2}$-Ramsey with respect to $A$.

\item \label{ramsey_f3/4}
There is a positive $q < 1$ such that for every finite $A \subseteq H$, there is a finite $B \subseteq H$ such that if
$f:B \to [0,1]$, then there is a $\nu$ in $P(B)$ such that for all $g,g' \in A$,
$g    \nu$ is in $P(B)$ and
\[
|g    \nu (f) - g'    \nu (f) | \leq q.
\]

\item \label{ramsey_f}
For every finite $A \subseteq H$ and $\epsilon > 0$, there is a finite $B \subseteq H$ such that if
$f:B \to [0,1]$ then there is a $\nu$ in $P(B)$ such that for all $g,g' \in A$,
$g    \nu$ is in $P(B)$ and
\[
|g    \nu (f) - g'    \nu (f) | < \epsilon.
\]

\item \label{amenable}
There is a $\mu \in P(H)$ such that for every $E \subseteq G$,
$\mu(g^{-1} E) = \mu (E)$ whenever $g$ is in $H$.

\end{enumerate}
\end{thm}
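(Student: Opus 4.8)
The plan is to prove the chain of implications $(\ref{amenable}) \Rightarrow (\ref{single_amen}) \Rightarrow (\ref{ramsey}) \Rightarrow (\ref{ramsey_f3/4}) \Rightarrow (\ref{ramsey_f}) \Rightarrow (\ref{amenable})$, with the two genuinely substantive steps being $(\ref{single_amen}) \Rightarrow (\ref{ramsey})$ and $(\ref{ramsey_f}) \Rightarrow (\ref{amenable})$; the remaining arrows are either immediate or soft compactness arguments. First, $(\ref{amenable}) \Rightarrow (\ref{single_amen})$ is trivial, since a single $\mu$ witnessing invariance for all $E$ and all $g \in H$ in particular witnesses the weaker per-$E$, per-finite-$A$ statement. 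Next, $(\ref{ramsey}) \Rightarrow (\ref{ramsey_f3/4})$ with $q = \frac{1}{2}$ follows by a rounding/averaging argument: given $f : B \to [0,1]$, approximate $f$ by a convex combination of $\{0,1\}$-valued functions (characteristic functions of level sets), apply the $\frac{1}{2}$-Ramsey property to each, and observe that because the witnessing $\nu$ can be chosen uniformly — or, more carefully, because the relevant constraint set is convex — one can pass to the average; the key point is that ``$P(A)\nu \subseteq P(B)$'' is a convex condition on $\nu$, and $|g\nu(E) - g'\nu(E)| \le \frac12$ for each indicator implies the same bound for $f$ after integrating, since $f = \int_0^1 \mathbf{1}_{\{f > t\}}\, dt$. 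The implication $(\ref{ramsey_f3/4}) \Rightarrow (\ref{ramsey_f})$ is the standard amplification: iterate the $q$-Ramsey property $n$ times along a product construction, composing the measures $\nu$, so that the discrepancy bound improves geometrically from $q$ to $q^n$, which is eventually below any prescribed $\epsilon$; the set $B$ for the iterated statement is built from nested applications of the hypothesis to suitably enlarged finite subsets of $H$.

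For $(\ref{single_amen}) \Rightarrow (\ref{ramsey})$ I would argue by contradiction using a compactness/ultrafilter argument. Suppose some finite $A \subseteq H$ admits no finite $\frac12$-Ramsey $B \subseteq H$. Then for every finite $B$ there is a ``bad'' coloring $E_B \subseteq B$ such that no $\nu \in P(B)$ with $P(A)\nu \subseteq P(B)$ makes the $A$-translates of $E_B$ agree within $\frac12$. Taking a nonprincipal ultrafilter on the directed set of finite subsets of $H$ and pulling back the $E_B$, one obtains a single $E \subseteq H \subseteq G$. The failure of the Ramsey condition at every finite stage should be massaged — using the finitary, quantitative nature of the condition, which only ever refers to the restriction of a measure to a finite set — into the statement that no $\mu \in P(H)$ can $A$-invariantly measure $E$: if such a $\mu$ existed, its restrictions to large finite $B$ (suitably normalized) would eventually furnish the $\nu$ that the bad colorings $E_B$ were supposed to preclude. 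This contradicts $(\ref{single_amen})$ applied to this $E$ and this $A$. The delicate point here is the passage from an honest probability measure $\mu$ on $H$ to finitely supported $\nu$'s living inside $P(B)$ with $P(A)\nu \subseteq P(B)$: one must choose $B$ large enough to contain $A \cdot (\text{support of the approximant})$, and control the error introduced by replacing $\mu$ with a finitely supported approximation — this is where the slack between $0$ and $\frac12$ is spent.

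The implication $(\ref{ramsey_f}) \Rightarrow (\ref{amenable})$ is, I expect, the main obstacle, since it must produce a \emph{single} global measure $\mu \in P(H)$ that invariantly measures \emph{every} $E \subseteq G$ along all of $H$. The strategy is: for each finite $A \subseteq H$ and each $\epsilon > 0$, use $(\ref{ramsey_f})$ to get a finite $B = B(A,\epsilon)$ and then, for the coloring $f$ that is identically the indicator of a given $E$, extract $\nu = \nu(A,\epsilon,E) \in P(B)$ with $|g\nu(E) - g'\nu(E)| < \epsilon$ for $g,g' \in A$ — but one wants $\nu$ to work simultaneously for all $E$, which is exactly why the hypothesis is stated with arbitrary $f : B \to [0,1]$ rather than just indicators: applying it with a cleverly chosen single $f$ (or iterating over a finite algebra of sets) yields a $\nu$ that is approximately $A$-invariant as a measure, i.e. $|g\nu(E) - \nu(E)|$ small for all $E \subseteq B$ and $g \in A$. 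Collecting these $\nu(A,\epsilon)$ as $A$ increases and $\epsilon \to 0$, and taking a weak$^*$ cluster point in $P(H)$ (which is weak$^*$-compact as a subset of the unit ball of $\ell^\infty(H)^*$), produces $\mu \in P(H)$ with $\mu(g^{-1}E) = \mu(E)$ for all $g \in H$ and all $E \subseteq G$; here one uses that $g\nu(E) = \nu(g^{-1}E)$ and that the condition ``$g\nu \in P(B)$'' guarantees the translates stay inside the space where the measures live, so no mass escapes in the limit. Finally $(\ref{amenable}) \Rightarrow (\ref{single_amen})$ closes the loop as already noted. The crux throughout is bookkeeping the support constraints $P(A)\nu \subseteq P(B)$ so that translation does not move mass off $B$, and converting the uniform finitary control of $(\ref{ramsey_f})$ into a single measure by compactness.
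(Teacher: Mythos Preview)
Your overall chain matches the paper's, and the compactness arguments for $(\ref{single_amen}) \Rightarrow (\ref{ramsey})$ and $(\ref{ramsey_f3/4}) \Rightarrow (\ref{ramsey_f})$ are fine in outline. The real gap is in $(\ref{ramsey}) \Rightarrow (\ref{ramsey_f3/4})$. Your layer-cake argument for $q = \tfrac{1}{2}$ does not work: the $\tfrac{1}{2}$-Ramsey hypothesis provides, for each level set $E_t = \{f > t\}$, a measure $\nu_t$ \emph{depending on $t$}, and neither averaging the $\nu_t$ nor invoking convexity of the support constraint $P(A)\nu \subseteq P(B)$ produces a single $\nu$ that controls all level sets at once. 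Concretely, if $\nu = \int_0^1 \nu_t\,dt$ then $g\nu(f) = \int_0^1 g\nu_t(f)\,dt$, which involves $g\nu_t(f)$ rather than $g\nu_t(E_t)$, and you have no bound on the former; the alternative expansion $g\nu(f) = \int_0^1 g\nu(E_s)\,ds$ requires controlling $g\nu_t(E_s)$ for $s \ne t$, which you also lack. The paper instead obtains $q = \tfrac{3}{4}$ by applying the Ramsey hypothesis to the \emph{single} set $E = \{b : f(b) \ge \tfrac{1}{2}\}$, obtaining one $\nu$, and then writing $f = \tfrac{1}{2}\chi_E + (f - \tfrac{1}{2}\chi_E)$: the first summand contributes at most $\tfrac{1}{2}\cdot\tfrac{1}{2} = \tfrac{1}{4}$ to the discrepancy, and the second takes values in $[0,\tfrac{1}{2}]$, so its contribution is trivially at most $\tfrac{1}{2}$. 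Any $q<1$ suffices for the amplification, so $\tfrac{3}{4}$ is enough.

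Your sketch of $(\ref{ramsey_f}) \Rightarrow (\ref{amenable})$ is also loose in one place: a single ``cleverly chosen $f$'' cannot yield a $\nu$ with $|g\nu(E) - \nu(E)|$ small for \emph{all} $E \subseteq B$ --- that would amount to $\|g\nu - \nu\|_{\ell^1}$ small, a genuine F{\o}lner-type condition strictly stronger than what a single application of $(\ref{ramsey_f})$ gives. Your parenthetical ``iterating over a finite algebra of sets'' is the right fix and is exactly what the paper does: given finitely many $E_i$ and $g_i$, build a tower $B_0 \subseteq \cdots \subseteq B_n$ where each $B_{i+1}$ satisfies $(\ref{ramsey_f})$ relative to $B_i$, choose $\nu_i \in P(B_{i+1})$ by downward recursion so that the tail product handles $E_i$, and set $\mu = \nu_0 \cdots \nu_{n-1}$; then weak$^*$ compactness of $P(H)$ finishes as you describe.
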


\begin{proof}
Observe that trivially (\ref{amenable}$\Rightarrow$\ref{single_amen}).
It is therefore sufficient to prove
(\ref{single_amen}$\Rightarrow$\ref{ramsey}$\Rightarrow$\ref{ramsey_f3/4}$\Rightarrow$\ref{ramsey_f}$\Rightarrow$\ref{amenable}).

(\ref{single_amen}$\Rightarrow$\ref{ramsey}):
Suppose that (\ref{ramsey}) is false for some finite $A \subseteq H$.
I claim there is a set $E \subseteq H$ such that for every $\mu \in P(H)$, there are $g,h \in A$
such that $|\mu(g^{-1} E) - \mu (h^{-1} E)| > \frac{1}{2}$ --- a condition which implies the
failure of (\ref{single_amen}).
By replacing $G$ by a subgroup if necessary, we may assume that $G$ is generated by $A$ and
in particular that $G$ is countable.
Let $B_n$ $(n < \infty)$ be an increasing sequence of finite sets covering $H$.
Define $T_n$ to be the collection of all pairs $(n,E)$ where $E$ is a subset of $B_n$
which witness that $B_n$ is not $\frac{1}{2}$-Ramsey with respect to $A$.
Let $T = \bigcup_n T_n$ and if $(n,E)$ and $(n',E')$ are in $T$,
define $(n,E) <_T (n',E')$ if $n < n'$ and $E = E' \cap B_n$.
Observe that if $(n',E')$ is in $T_{n'}$ and $n < n'$, then $(n,E' \cap B_n)$ is in $T_n$.
Thus $(T,<_T)$ is an infinite, finitely branching tree and hence there is an $E \subseteq H$ such that
$(n,E \cap B_n)$ is in $T_n$ for each $n$.
If there were a measure $\nu$ such that $|\nu(g^{-1} E) - \nu (h^{-1}  E)| < \frac{1}{2}$
for all $g,h \in A$,
there would exist such a $\nu$ which has a finite support $S$.
But this would be a contradiction since then $S \cup (A \cdot S)$
would be contained in some $B_n$
and would witness that $(n,E \cap B_n)$ was not in $T_n$.

(\ref{ramsey}$\Rightarrow$\ref{ramsey_f3/4}):
Let $A \subseteq H$ be a given finite set
and let $B \subseteq H$ be finite and $\frac{1}{2}$-Ramsey with respect to $A$.
It suffices to prove that $B$ satisfies the conclusion of (\ref{ramsey_f3/4}) with $q = 3/4$.
Let $f:B \to [0,1]$ be given and define $E = \{b \in B : f(b) \geq 1/2\}$.
By assumption, there is a $\nu$ in $P(B)$ such that $P(A)  \nu \subseteq P(B)$
and for all $g,g' \in A$,
$|g  \nu (E) - g'  \nu (E)| \leq 1/2$.
Also
\[
0 \leq \min(g  \nu (f - \frac{1}{2} \chi_E), g'  \nu (f- \frac{1}{2}\chi_E))
\]
\[
\max(g  \nu (f - \frac{1}{2}\chi_E), g'  \nu (f- \frac{1}{2}\chi_E)) \leq 1/2
\]
Notice that if $0 \leq a,b \leq 1/2$, then $|a-b| \leq 1/2$.
Therefore for all $g,g' \in A$
\[
|g  \nu (f) - g'  \nu (f)| =
\]
\[
|\frac{1}{2}(g  \nu (E) - g'   \nu(E)) + g  \nu(f - \frac{1}{2}\chi_E) - g'  \nu (f-\frac{1}{2}\chi_E)|
\]
\[
\leq \frac{1}{2} |g  \nu (E) - g'  \nu (E)| + |g  \nu(f- \frac{1}{2}\chi_E) - g'  \nu (f-\frac{1}{2}\chi_E)|
\]
\[
 \leq 1/4 + 1/2.
\]

(\ref{ramsey_f3/4}$\Rightarrow$\ref{ramsey_f}):
Let $A \subseteq H$ and $\epsilon >0$ be given.
Let $n$ be such that $q^n < \epsilon$ and construct a sequence
$B_i$ $(i \leq n)$ such that, setting $B_0 = A$,
$B_{i+1}$ satisfies the conclusion of (\ref{ramsey_f3/4}) with respect to 
$B_i$ and
\[
B_i \cup (B_i \cdot B_i) \subseteq B_{i+1}.
\]
Construct $\nu_i$ $(i < n)$ by downward recursion
such that $P(B_i)  \nu_i \subseteq P(B_{i+1})$ and for all
$g,g' \in B_i$,
\[
| g  \nu_i  \cdots  \nu_{n-1} (f) - g'  \nu_i \cdots  \nu_{n-1}( f)| \leq q^{n-i}
\]
This is achieved by applying (\ref{ramsey_f3/4}) to the function $f_i$ defined by
\[
f_{n-1} = f
\]
\[
f_i(g) =  (1/q)^{n-i-1} \Big( g \nu_{i+1} \cdots  \nu_{n-1} (f) - r_i
 \min_{g' \in A}\ g'\nu_{i+1}  \cdots  \nu_{n-1} (f) \Big)
\]
if $i < n-1$.
Our inductive hypothesis implies that
the range of $f_i$ is contained within $[0,1]$.
Therefore there is a $\nu_i$ such that
$P(B_i)  \nu_i \subseteq P(B_{i+1})$ and
\[
|g  \nu_i (f_i) - g'  \nu_i (f_i)| \leq q
\]
holds for every $g,g' \in B_i$ 
and thus
\[
(1/q)^{n-i-1} |g  \nu_i  \cdots  \nu_{n-1} (f) - g'  \nu_i  \cdots  \nu_{n-1} (f)|
\]
\[
= (1/q)^{n-i-1} |f(g  \nu_i  \cdots  \nu_{n-1}) - f(g  \nu_i   \cdots  \nu_{n-1})|
\]
\[
= |f_i(g  \nu_i) - f_i(g'  \nu_i)| = |g  \nu_i (f_i) - g'  \nu_i (f_i)| \leq q.
\]
Multiplying both sides of the inequality by $q^{n-i-1}$, we see that
$\nu_i$ satisfied the desired inequality.
This completes the recursion.
If $\nu = \nu_0  \cdots  \nu_{n-1}$, then for all
$g,g'$ in $A = B_0$ we have that
\[
|g  \nu (f) - g'  \nu (f)| \leq q^n < \epsilon.
\]

(\ref{ramsey_f}$\Rightarrow$\ref{amenable}):
Observe that, by compactness, it is sufficient to prove that for every $\epsilon > 0$, every
finite list $E_i$ $(i < n)$ of subsets of $H$, and
$g_i$ $(i < n)$ in $H$, there is a finitely supported $\mu \in P(H)$ such that
$$
|\mu (g_i^{-1}  E_i) - \mu (E_i) | < \epsilon.
$$
Set $B_0 = \{e_G\} \cup \{g_i : i < n\}$ and construct a sequence
$B_i$ $(i \leq n)$ such that $B_i \cdot B_i \subseteq B_{i+1}$ and
$B_{i+1}$ satisfies (\ref{ramsey_f}) with $B_i$ in place
of $A$ and $\epsilon/2$ in place of $\epsilon$.
Inductively construct $\nu_i$ $(i < n)$ by downward recursion on $i$.
If $\nu_j$ $(i < j < n)$ has been constructed, let $\nu_i \in P(B_{i+1})$ be such that
$P(B_i)  \nu_i \subseteq P(B_{i+1})$ and
$$
|\mu  \nu_{i}  \cdots  \nu_{n-1} (E_i) - \mu' \nu_i  \cdots  \nu_{n-1} (E_i)| < \epsilon/2
$$
for all $\mu , \mu' \in P(B_i)$.
Set $\mu = \nu_0  \cdots  \nu_{n-1}$.
If $i < n$, then since $\nu_0  \cdots  \nu_{i-1}$ and
$g_i^{-1}  \nu_0 \cdots \nu_{i-1}$ are in $P(B_i)$,
$$
|g_i  \mu (E_i) - \nu_i  \cdots  \nu_{n-1} (E_i)| < \epsilon/2
$$
$$
|\mu (E_i) - \nu_i  \cdots  \nu_{n-1} (E_i)| < \epsilon/2
$$
and therefore
$|\mu (g_i^{-1}  E_i) - \mu (E_i)| < \epsilon$.
\end{proof}

\section{Comparing the Ramsey and F\o lner functions}

\label{Ramsey_function:sec}

The purpose of this section is to define the \emph{Ramsey function} of a finitely generated
group with respect to a finite generating set and relate it to the \emph{F\o lner function}
which has been studied in, e.g., \cite{iso_profiles_fg}, \cite{pw_auto_groups}, \cite{entropy_isoperimetry}.
The main result of this section is due to Henry Towsner, answering a question in an
early draft of this paper:
\emph{The F\o lner function for a given group and generating set
can be obtained from the Ramsey function by primitive recursion.}
It is included with his kind permission.

We will now turn to the definitions of the F\o lner and Ramsey functions.
Let $G$ be a group with a fixed finite generating set $S$ (which is not
required to be closed under inversion).
Let $B_n$ denote the elements of $G$ whose distance from the identity is at most $n$
in the word metric.
Define the following functions:
\begin{itemize}

\item
$\Fol_{G,S} (k)$ is the minimum cardinality of a $1/k$-F\o lner set with respect to
the generating set $S$.

\item
$F_{G,S}(m,\epsilon)$ is the minimum $n$ such that there is a $\nu$ in $P(B_n)$
such that $P(B_m) \nu \subseteq P(B_n)$ and
$\sum_{g \in B_m} ||g \nu - \nu||_{\ell_1} < \epsilon$.

\item
$R_{G,S}(m,\epsilon)$ is the minimum $n$ such that $B_n$ is
$\epsilon$-Ramsey with respect to $B_m$.

\item
$\tilde R_{G,S}(m,\epsilon,l)$ is the minimum $n$ such that if $f_i$ $(i < l)$ is a sequence of functions
from $B_n$ into $[0,1]$, then there is a $\nu \in P(B_n)$ such that $P(B_m)  \nu \subseteq P(B_n)$
and such that for every $g,g' \in B_m$ and $i < l$,
\[
|g  \nu (f_i) - g'  \nu (f_i)| < \epsilon
\]

\end{itemize}
The definition of $F_{G,S}$ is formulated so that it
is a triviality that $R_{G,S}(m,k) \leq \tilde R_{G,S}(m,k) \leq F_{G,S}(m,k)$
holds for all $m$ and $k$.
The following relationship holds between $F_{G,S}$ and $\Fol_{G,S}$:
\[
\Fol_{G,S} (k) \leq {(2|S|+1)}^{F_{G,S} (1,1/k)}
\]
The reason for this is that the $n$-ball in $G$ with respect to $S$ contains
at most $(2|S|+1)^n$ elements and if $\nu \in \ell^1(G)$ is such that
\[
\sum_{g \in S} ||g \nu - \nu||_{\ell^1} < \epsilon
\]
then the support of $\nu$ contains an $\epsilon$-F\o lner set with respect to $S$ \cite{Folner:Namioka}.

Set $R_{G,S}(m) = R_{G,S}(m,1/2)$ and $\tilde R_{G,S}(m,\epsilon) = \tilde R_{G,S}(m,\epsilon,1)$.
The proof of Theorem \ref{amen_criterion2} shows that 
\[
\tilde R_{G,S}(m,\epsilon) \leq R_{G,S}^p(m)
\]
whenever $(3/4)^p < \epsilon$ (here $R^p_{G,S}$ denotes the $p$-fold composition of
$R_{G,S}$).
Furthermore, it shows that
\[
\tilde R_{G,S}(m,\epsilon,l) \leq \tilde R_{G,S}(\tilde R_{G,S}(m,\epsilon,l-1),\epsilon) 
\]
\[
= \tilde R_{G,S}(\tilde R_{G,S}( \ldots \tilde R_{G,S}(m,\epsilon) \ldots,\epsilon),\epsilon)
\leq R^{lp}_{G,S}(m)
\]
whenever $l > 1$.
Finally we have the following proposition.
\begin{prop}
$F_{G,S} (m,2 \epsilon |S|) \leq \tilde R_{G,S}(m,\epsilon,|S|)$.
\end{prop}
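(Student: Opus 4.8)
The plan is to obtain the desired measure through a minimax argument. Write $n=\tilde R_{G,S}(m,\epsilon,|S|)$ (there is nothing to prove if this is infinite) and set
\[
K=\{\nu\in P(B_n):P(B_m)\,\nu\subseteq P(B_n)\},
\]
which is a non-empty compact convex set: it is the simplex on $C=\{x\in B_n:B_mx\subseteq B_n\}$, and since $n\ge m$ we have $e_G\in C$. By the definition of $F_{G,S}$ it suffices to produce $\nu\in K$ with $\sum_{s\in S}\|s\,\nu-\nu\|_{\ell^1}<2\epsilon|S|$. The starting point is the elementary identity that, for $\nu\in K$ and $s\in S$, the signed measure $s\,\nu-\nu$ on $B_n$ has total mass $0$, so that
\[
\|s\,\nu-\nu\|_{\ell^1}=2\max_{f\colon B_n\to[0,1]}(s\,\nu-\nu)(f),
\]
the maximum being attained at the indicator of $\{x:s\,\nu(\{x\})>\nu(\{x\})\}$. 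As the maximizers for distinct generators are chosen independently, summing over $S$ gives
\[
\sum_{s\in S}\|s\,\nu-\nu\|_{\ell^1}=2\max_{(f_s)_{s\in S}}\ \sum_{s\in S}(s\,\nu-\nu)(f_s),
\]
the outer maximum running over $|S|$-tuples of functions $B_n\to[0,1]$.

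Next I would apply the minimax theorem to the pairing
\[
\Lambda\bigl(\nu,(f_s)\bigr)=\sum_{s\in S}(s\,\nu-\nu)(f_s)=\sum_{s\in S}\sum_{x\in B_n}\nu(\{x\})\bigl(f_s(sx)-f_s(x)\bigr),
\]
which is bilinear — hence continuous and affine in each variable — on the product of the compact convex sets $K$ and $[0,1]^{B_n\times S}$. Thus $\min_{\nu\in K}\max_{(f_s)}\Lambda=\max_{(f_s)}\min_{\nu\in K}\Lambda$, and combined with the previous display,
\[
\min_{\nu\in K}\ \sum_{s\in S}\|s\,\nu-\nu\|_{\ell^1}=2\,\max_{(f_s)}\ \min_{\nu\in K}\Lambda\bigl(\nu,(f_s)\bigr).
\]
So it is enough to bound the right-hand side by $2\epsilon|S|$, and here $\tilde R_{G,S}$ enters: for a fixed tuple $(f_s)_{s\in S}$, the defining property of $n=\tilde R_{G,S}(m,\epsilon,|S|)$ yields some $\nu\in K$ with $|g\,\nu(f_s)-g'\,\nu(f_s)|<\epsilon$ for all $g,g'\in B_m$ and all $s$; taking $g=s$ and $g'=e_G$ (both in $B_m$, as $m\ge1$) gives $(s\,\nu-\nu)(f_s)<\epsilon$ for every $s$, so $\Lambda(\nu,(f_s))<\epsilon|S|$ and a fortiori $\min_{\nu\in K}\Lambda(\nu,(f_s))<\epsilon|S|$. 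Since $(f_s)\mapsto\min_{\nu\in K}\Lambda(\nu,(f_s))$ is continuous on the compact set $[0,1]^{B_n\times S}$ and is everywhere $<\epsilon|S|$, its maximum is also $<\epsilon|S|$; hence $\min_{\nu\in K}\sum_{s\in S}\|s\,\nu-\nu\|_{\ell^1}<2\epsilon|S|$, and any minimizer is the measure we want.

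The only genuine obstacle is the interchange of $\min$ and $\max$. What $\tilde R_{G,S}(m,\epsilon,|S|)=n$ provides is, for every choice of $|S|$ test functions into $[0,1]$, a measure in $K$ that is almost invariant against those functions; what the $\ell^1$-bound demands is a single $\nu\in K$ that is almost invariant against the $|S|$ test functions it itself determines, namely the indicators of its own sets $\{x:s\,\nu(\{x\})>\nu(\{x\})\}$. Reconciling these two requires the minimax theorem, and I expect this to be the heart of the matter; the supporting facts — the formula for $\|s\,\nu-\nu\|_{\ell^1}$, the bilinearity and compactness that make the minimax theorem applicable, and the short continuity remark that keeps the final inequality strict — are all routine.
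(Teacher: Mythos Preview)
Your argument is correct and is essentially the paper's proof repackaged: both hinge on the same convex-duality step, the paper invoking the Hahn--Banach separation theorem to produce the obstructing tuple $(f_s)_{s\in S}$ by contradiction, while you invoke the minimax theorem to interchange $\min_\nu$ and $\max_{(f_s)}$ directly. Your formulation is if anything a bit cleaner, and your closing continuity remark (that a continuous function strictly below $\epsilon|S|$ on a compact set has maximum strictly below $\epsilon|S|$) makes the strict inequality explicit where the paper's separation argument handles it implicitly.
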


\begin{proof}
Let $B = B_n$ where $n = \tilde R_{G,S}(m,\epsilon,|S|)$.
Define
\[
C = \{\Seq{ g    \nu - \nu : g \in S} : \nu \in P(B) \textrm{ and } P(S) \nu \subseteq P(B) \}.
\]
\[
U = \{\xi \in (\ell^1(B))^S : \sum_{g\in S} ||\xi_g||_{\ell^1} < 2 \epsilon |S| \}
\]
Observe that $C$ and $U$ are both convex subsets
of $(\ell^1(B))^S$ with $C$ being compact and $U$ being open.
If $C \cap U$ is non empty, then there is a $\nu$ in $P(B)$ such that $P(S) \nu \subseteq P(B)$ and
\[
\sum_{g \in S} ||g \nu - \nu||_{\ell^1} < 2 \epsilon |S|.
\]
In particular, we would have that
$F_{G,S}(m,2 \epsilon |S|) \leq n =  \tilde R_{G,S}(m,\epsilon,|S|)$.

Now suppose for contradiction that $C$ and $U$ are disjoint.
By the Hahn-Banach separation theorem (see \cite[3.4]{functional_analysis:Rudin}),
there is a linear functional $\Lambda$ defined on $(\ell^1(B))^S$ such that, for some $r \in \Rbb$, 
$\Lambda \xi < r$ if $\xi \in U$ and $r \leq \Lambda \xi$ if
$\xi \in C$.
In the present setting, such a functional $\Lambda$ takes the form
\[
\Lambda \xi = 
\sum_{g \in S} \xi_g f_g
\]
for some $\Seq{f_g : g \in S} \in (\ell^\infty(B))^S$.
If we give $(\ell^1(B))^S$ the norm by identifying it with $\ell^1(B \times S)$,
then we may assume that $\Lambda$ has norm $1$.
Since $\ell^1(B \times S)^*$ is isometric to $\ell^\infty(B \times S)$, it follows
that $|f_g(b)| \leq 1$ for all $b$ and $g$ with equality obtained for some $(b,g) \in B \times S$.
It follows that we may take $r = \epsilon |S|$.
This is a contradiction, however, since by our choice of $B = B_n$,
there is a $\nu$ in $P(B)$ such that $P(S) \nu \subseteq P(B)$ and for all $g \in S$,
\[
|g \nu (f_g) - \nu (f_g)| < 2\epsilon
\]
(the factor of 2 is because
$f_g$ maps into an interval of length $2$)
and therefore $\sum_{g \in S} |g \nu (f_g) - \nu (f_g)| < 2 \epsilon |S|$.
\end{proof}

Putting this together, we have the following upper bound on the F\o lner function
in terms of the iterated Ramsey function.

\begin{thm}
$\Fol_{G,S} (k) \leq (2s + 1)^{R^{ps}(1)}$ whenever
$(3/4)^p < 1/(2ks)$ where $s = |S|$.
\end{thm}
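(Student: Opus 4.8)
The plan is to chain together the inequalities already assembled in this section, tracking the parameters carefully. First I would invoke the displayed bound
\[
\Fol_{G,S}(k) \leq (2|S|+1)^{F_{G,S}(1,1/k)},
\]
so that it suffices to control $F_{G,S}(1,1/k)$ from above. Writing $s = |S|$, the Proposition gives $F_{G,S}(m, 2\epsilon s) \leq \tilde R_{G,S}(m,\epsilon,s)$; I want to apply this with $m = 1$ and with $\epsilon$ chosen so that $2\epsilon s = 1/k$, i.e.\ $\epsilon = 1/(2ks)$. This yields $F_{G,S}(1,1/k) \leq \tilde R_{G,S}(1, 1/(2ks), s)$.

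Next I would bound the iterated-function Ramsey quantity by a composition power of the basic Ramsey function. From the discussion preceding the Proposition, $\tilde R_{G,S}(m,\epsilon,l) \leq R^{lp}_{G,S}(m)$ whenever $(3/4)^p < \epsilon$ (the case $l=1$ being $\tilde R_{G,S}(m,\epsilon) \leq R^p_{G,S}(m)$, and the general case following by the nesting inequality $\tilde R_{G,S}(m,\epsilon,l) \leq \tilde R_{G,S}(\tilde R_{G,S}(m,\epsilon,l-1),\epsilon)$ and induction on $l$). Applying this with $l = s$, $m = 1$, and $\epsilon = 1/(2ks)$, and assuming $p$ is chosen so that $(3/4)^p < 1/(2ks)$, gives
\[
\tilde R_{G,S}\!\left(1, \tfrac{1}{2ks}, s\right) \leq R^{ps}_{G,S}(1).
\]
Stringing the three steps together produces $\Fol_{G,S}(k) \leq (2s+1)^{R^{ps}(1)}$ under the hypothesis $(3/4)^p < 1/(2ks)$, which is exactly the claimed bound.

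There is really no substantial obstacle here, since every ingredient has already been established: the relationship between $\Fol$ and $F$, the Proposition relating $F$ to $\tilde R$, and the estimate of $\tilde R$ by iterates of $R$ extracted from the proof of Theorem \ref{amen_criterion2}. The only point requiring a modicum of care is the bookkeeping of parameters — making sure the $\epsilon$ used in the Proposition ($2\epsilon s = 1/k$) is consistent with the $\epsilon$ appearing in the hypothesis $(3/4)^p < \epsilon$, which forces the stated condition $(3/4)^p < 1/(2ks)$. One should also note that the Proposition's factor of $2$ on the F\o lner side and the factor $|S|$ both enter precisely through this choice, and that $R$ without further decoration denotes $R_{G,S}$ at error parameter $1/2$ while the composition exponent $ps$ absorbs both the $p$ from the $(3/4)^p$ estimate and the $l = s$ from the number of functions. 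Thus the proof is essentially a one-line assembly once the constants are matched.
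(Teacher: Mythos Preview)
Your proposal is correct and is precisely the assembly the paper intends: the theorem is stated immediately after the phrase ``Putting this together,'' and the paper offers no further argument because the three ingredients you cite (the bound $\Fol_{G,S}(k)\le(2s+1)^{F_{G,S}(1,1/k)}$, the Proposition $F_{G,S}(m,2\epsilon s)\le\tilde R_{G,S}(m,\epsilon,s)$, and the iterate bound $\tilde R_{G,S}(m,\epsilon,l)\le R_{G,S}^{lp}(m)$ when $(3/4)^p<\epsilon$) are exactly what is meant to be chained. Your parameter matching $\epsilon=1/(2ks)$ is the intended one.
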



\section{Invariantly measurable sets in $\Fbb_2$}

In light of the theorem of the previous section, it is natural to define, for
an arbitrary group $G$, the collection $\Mscr_G$ of subsets of $G$ which are
invariantly measurable.
It is tempting to suspect that Theorem \ref{amen_criterion}
might be subsumed in a more general result
which asserts that, in any group $G$, there is a $\mu$
which measures each element of $\Mscr_G$ invariantly.
Theorem \ref{inv_meas_c.e.}, whose proof we now turn to, asserts that this is not the case.

\begin{proof}
Let $a$ and $b$ denote the generators of $\Fbb_2$ and
let $A$ denote the collection of all elements of $\Fbb_2$ whose reduced word begins
with $a$ or $a^{-1}$.
Let $h:\Fbb_2 \to \Zbb$ be the homomorphism which sends $a$ to $1$ and $b$ to $-1$
and define $Z_k = \{w \in \Fbb_2 : h(w) > k\}$, setting $Z = Z_0$.
Define
\[
X = A \cup Z^{\C} \quad \quad X' = A \cap Z = X \setminus Z^{\C}
\]
\[
Y = A^{\C} \cup Z \quad \quad Y' = A^{\C} \cap Z^{\C} = Y \setminus Z
\]
(Here a superscript of $\C$ denotes complementation.)
First we will show that $X$, $X'$, $Y$, $Y'$, and $Z$ are each invariantly measurable.
Observe that $X' \subseteq Z \subseteq Y$ and $Y' \subseteq Z^{\C} \subseteq X$.
It therefore suffices to find
measures $\mu_0$ and $\mu_1$ such that $\mu_i(w    Z) = i$ for $i = 0,1$ and $w \in \Fbb_2$.
This is because then $\mu_0$ will measure $X$, $X'$, and $Z$ invariantly
(with measures $1$, $0$, and $0$) and
$\mu_1$ will measure $Y$, $Y'$, and $Z$ invariantly
(with measures $1$, $0$, and $1$).
Such measures are constructed
by extending the families $\{Z_k^{\C} : k \in \Zbb\}$
and $\{Z_k : k \in \Zbb\}$, each of which have the finite intersection property,
to ultrafilters, and regarding them as elements of $P(\Fbb_2)$.
Invariance follows from the observation that $w    Z = Z_{h(w)}$ and the containments noted above.

Now suppose for contradiction that there is a
$\mu \in P(\Fbb_2)$ which measures $X$, $X'$, $Y$, $Y'$, and $Z$ invariantly.
Since the sequences $a^k    Y'$ $(k < \infty)$ and $b^k    X'$ $(k < \infty)$ each consist
of pairwise disjoint elements, $\mu(X') = \mu (Y') = 0$.
Observe that $A \symdif Z^{\C} = X' \cup Y'$.
Since $X'$ and $Y'$ are measured invariantly and are disjoint,
$X' \cup Y'$ is also measured invariantly.
Therefore we have
\[
\mu ((b^k    A) \symdif (b^k    Z^{\C})) = 
\mu (b^k    (A \symdif Z^{\C})) = 
\mu (A \symdif Z^{\C}) = 0.
\]
Thus $\mu (b^k    A) = \mu (b^k    Z^{\C}) = \mu (Z^{\C})$.
Since $b^k   A$ $(k < \infty)$ is a sequence of pairwise disjoint sets,
$\mu (Z^{\C}) = 0$.
Similarly $A^{\C} \symdif Z = X' \cup Y'$ and therefore by a similar argument
$\mu (a^k    A^{\C}) = \mu (Z)$ for all $k$.
Using the fact that
$a^k    A^{\C}$ $(k < \infty)$ is a sequence of pairwise disjoint sets one obtains that
$\mu (Z) = 0$.
This is a contradiction since
$\mu (Z) + \mu (Z^{\C}) = 1$.
This finishes the proof.
\end{proof}

\section{A criterion for non amenability: unbalanced puzzles}

The purpose of this section is to prove Theorem \ref{puzzle}.
The following two simple propositions capture most of what is left to prove.

\begin{prop} \label{balanced_to_measure}
Let $G$ be a group, $\epsilon \geq 0$, and $A$ be a finite subset of $G$.
If $E \subseteq G$, $\Yscr$ is $\epsilon$-balanced, and $B$ is a finite
set such that 
\[
\Yscr = \{X_E (g) : g \in B\},
\]
then there is a $\nu \in P(B)$ such that
$|a    \nu (E) - a'    \nu(E)| \leq \epsilon$
for all $a,a' \in A$.
\end{prop}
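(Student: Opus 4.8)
The plan is to read the desired measure directly off the convex combination witnessing $\epsilon$-balancedness. First I would unpack the hypothesis: since $\Yscr$ is $\epsilon$-balanced, fix nonnegative reals $\lambda_Y$ (for $Y \in \Yscr$) with $\sum_{Y \in \Yscr} \lambda_Y = 1$ such that the convex combination $v = \sum_{Y \in \Yscr} \lambda_Y \chi_Y : A \to [0,1]$ satisfies $\max(v) - \min(v) \leq \epsilon$. Using the hypothesis $\Yscr = \{X_E(g) : g \in B\}$, for each $Y \in \Yscr$ choose some $g_Y \in B$ with $X_E(g_Y) = Y$ (the picture map $g \mapsto X_E(g)$ need not be injective, so one just picks a representative; distinct $Y$'s automatically get distinct $g_Y$'s). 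Then I would set $\nu = \sum_{Y \in \Yscr} \lambda_Y\, g_Y$, identifying $g_Y$ with its point mass as in the paper; since $B$ is finite this is a genuine element of $P(B)$.

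The verification is then a one-line computation. For $a \in A$, using $a \nu (E) = \nu(a^{-1}E)$ together with the fact that $a \in X_E(g) \iff ag \in E$, one gets
\[
a \nu(E) = \nu(a^{-1}E) = \sum_{Y \in \Yscr} \lambda_Y \chi_E(a g_Y) = \sum_{Y \in \Yscr} \lambda_Y \chi_{X_E(g_Y)}(a) = \sum_{Y \in \Yscr} \lambda_Y \chi_Y(a) = v(a).
\]
Hence for all $a, a' \in A$ we have $|a\nu(E) - a'\nu(E)| = |v(a) - v(a')| \leq \max(v) - \min(v) \leq \epsilon$, which is exactly the conclusion.

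There is essentially no obstacle beyond bookkeeping: the only point requiring care is the double role of $\chi$, namely recognizing that the characteristic function of the picture $X_E(g) \subseteq A$ evaluated at $a$ coincides with the characteristic function of $E$ evaluated at $ag$, plus the harmless choice of one $g_Y$ per $Y$. The proposition is really the easy, measure-producing half of the dictionary underlying Theorem \ref{puzzle}; the substantive work will be in the companion direction, where one must extract an \emph{unbalanced} realized collection from the failure of amenability (equivalently, from the failure of the $0$-Ramsey property).
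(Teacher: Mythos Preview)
Your proof is correct and essentially identical to the paper's: the paper phrases the selection of representatives as ``replace $B$ by a subset on which $b \mapsto X_E(b)$ is injective'' and then sets $\nu(\{b\}) = \mu(\{X_E(b)\})$, which is exactly your choice of one $g_Y$ per $Y$ with $\nu(\{g_Y\}) = \lambda_Y$, followed by the same computation $a\nu(E) = v(a)$.
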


\begin{remark}
Notice that a typical $B$ satisfying the hypothesis of this proposition
may well satisfy that it is its own boundary in the Cayley graph, even if
$\epsilon = 0$.
This is again quite different than what is possible with F\o lner sets
(even if the F\o lner sets are allowed to be ``weighted'').
\end{remark}

\begin{proof}
Let $\mu \in P(\Yscr)$ be such that
\[
|\mu (\{X \in \Yscr : a \in X\}) - \mu (\{X \in \Yscr : a' \in X\})| \leq \epsilon
\]
for every $a,a' \in A$.
By replacing $B$ be a subset, we may assume that for each $b \ne b'$ in $B$,
$X_E(b) \ne X_E(b')$.
Define $\nu \in P(B)$ by
$\nu (\{b\}) = \mu (\{X_E(b)\})$.
Now suppose that $a \in A$.
\[
\nu (a^{-1}    E) = 
\sum \{\nu (\{b\}) : b \in a^{-1}    E \} 
\]
\[
= \sum \{\mu (\{X_E(b)\}) : a    b \in E\} =
\mu (\{X \in \Yscr : a \in X\})
\]
The conclusion now follows from our choice of $\mu$.
\end{proof}

\begin{prop} \label{measure_to_balanced}
Let $G$ be a group and $A$ be a finite subset of $G$.
If $E \subseteq G$ and there is a $\nu \in P(G)$ such that
\[
|a    \nu(E) - a'    \nu (E)| \leq \epsilon
\]
for every $a,a' \in A$,
then 
\[
\{X \in \Xscr_E : \nu (\{g \in G : X_E (g) = X\}) > 0 \}
\]
is $\epsilon$-balanced (and in particular $\Xscr_E$ is
$\epsilon$-balanced).
\end{prop}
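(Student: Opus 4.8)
The plan is to establish Proposition~\ref{measure_to_balanced} as a near-converse to Proposition~\ref{balanced_to_measure}, exploiting the same dictionary between finitely supported measures on $G$ and convex combinations of characteristic functions of pictures. First I would observe that since $\nu$ is a finitely supported probability measure on $G$, only finitely many pictures $X \in \Xscr_E$ have positive $\nu$-weight, namely those $X$ for which $\nu(\{g : X_E(g) = X\}) > 0$; call this finite collection $\Yscr$. The natural candidate for the required convex combination is the pushforward: define $v : A \to [0,1]$ by $v(a) = \sum\{\nu(\{g\}) : a \in X_E(g)\} = \sum\{\nu(\{g\}) : ag \in E\}$, which by the definition of $X_E$ is exactly the partition-of-unity value one gets by integrating the characteristic functions $\chi_X$ ($X \in \Yscr$) against the induced probability measure $\mu$ on $\Yscr$ given by $\mu(\{X\}) = \nu(\{g \in G : X_E(g) = X\})$. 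So $v$ is genuinely a convex combination of the $\chi_X$ for $X \in \Yscr$.

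Next I would identify $v(a)$ with the quantity $a \nu(E)$: indeed $a \nu(E) = \nu(a^{-1}E) = \sum\{\nu(\{g\}) : g \in a^{-1}E\} = \sum\{\nu(\{g\}) : ag \in E\} = v(a)$, precisely the same computation carried out in the proof of Proposition~\ref{balanced_to_measure}. Given this identification, the hypothesis $|a\nu(E) - a'\nu(E)| \le \epsilon$ for all $a, a' \in A$ translates verbatim into $|v(a) - v(a')| \le \epsilon$ for all $a, a' \in A$, i.e. $\max(v) - \min(v) \le \epsilon$ where the max and min are taken over $A$. Since $v$ is a convex combination of the characteristic functions of the elements of $\Yscr$, this says exactly that $\Yscr$ is $\epsilon$-balanced. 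Finally, to get the parenthetical claim that $\Xscr_E$ itself is $\epsilon$-balanced, I would note that extending $v$ by assigning weight $0$ to the pictures in $\Xscr_E \setminus \Yscr$ exhibits $v$ as a convex combination of the characteristic functions of all of $\Xscr_E$ with the same max and min — adding terms with coefficient zero changes nothing — so the same $v$ witnesses that $\Xscr_E$ is $\epsilon$-balanced.

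The only mild subtlety, and the step I would be most careful about, is the bookkeeping in the sum defining $v$ when distinct group elements $g \ne g'$ yield the same picture $X_E(g) = X_E(g')$: the measure $\mu$ on $\Yscr$ must aggregate the $\nu$-masses of all such $g$, which is exactly why $\mu(\{X\})$ is defined as $\nu(\{g : X_E(g) = X\})$ rather than as $\nu(\{g\})$ for a single representative. Once one checks that with this definition $v(a) = \sum_{X \in \Yscr, a \in X} \mu(\{X\})$ — a routine reindexing of a finite sum by grouping terms according to the value of $X_E(g)$ — everything else is immediate from the computations already appearing in the proof of the previous proposition. No compactness or Hahn--Banach input is needed here; this direction is purely the elementary translation from measures to weighted pictures.
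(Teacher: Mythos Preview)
Your approach is essentially the paper's: push $\nu$ forward to a measure $\mu$ on pictures via $\mu(\{X\}) = \nu(\{g : X_E(g) = X\})$, and check that $\sum_{X \ni a} \mu(\{X\}) = \nu(a^{-1}E) = a\nu(E)$. There is, however, one genuine slip. You open by asserting that ``$\nu$ is a finitely supported probability measure on $G$,'' and your formula $v(a) = \sum\{\nu(\{g\}) : ag \in E\}$ together with the step $\nu(a^{-1}E) = \sum\{\nu(\{g\}) : g \in a^{-1}E\}$ both rely on this. But in this paper $P(G)$ denotes \emph{all finitely additive} probability measures on $G$, not just the finitely supported ones; the proposition is stated for arbitrary $\nu \in P(G)$. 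For, say, a free ultrafilter $\nu$, every singleton has measure $0$, so your pointwise sum collapses to $0$ while $\nu(a^{-1}E)$ need not.

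The fix is exactly the computation the paper carries out, and it is already implicit in your ``bookkeeping'' paragraph: do not sum over $g \in G$ at all. Since $A$ is finite, $\Xscr_E \subseteq 2^A$ is finite, so the sets $\{g : X_E(g) = X\}$ for $X \in \Xscr_E$ form a \emph{finite} partition of $G$. Finite additivity alone then gives
\[
\sum_{X \ni a} \mu(\{X\}) \;=\; \nu\Big(\bigcup_{X \ni a} \{g : X_E(g) = X\}\Big) \;=\; \nu(\{g : a \in X_E(g)\}) \;=\; \nu(a^{-1}E),
\]
and also $\sum_X \mu(\{X\}) = \nu(G) = 1$, so $\mu$ really is a probability measure on $\Yscr$. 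With this correction your argument coincides with the paper's; the remark about finite support should simply be deleted (the finiteness of $\Yscr$ comes for free from the finiteness of $A$, not from any hypothesis on $\nu$).
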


\begin{proof}
Let $G$, $A$, $E$, and $\nu$ be given as in the statement of the proposition.
For each $X$ in $\Xscr_E$, define
\[
\mu(\{X\}) = \nu (\{g \in G : X_E(g) = X\}).
\]
It is sufficient to show that if $a$ is in $A$,
then $\sum_{X \ni a} \mu(\{X\}) = \nu (a^{-1}    E)$.
To this end
\[
\sum_{X \ni a} \mu(\{X\}) = \nu (\{g \in G : a \in X_E(g)\})
\]
\[
= \nu (\{g \in G : a  g \in E\}) = \nu (a^{-1}    E) .
\]
\end{proof}

Now we are ready to prove Theorem \ref{puzzle}.
All implications will be established by proving the contrapositive.
The implication (\ref{non_amen:intro}$\Rightarrow$\ref{unbalanced:intro}) follows from
Proposition \ref{balanced_to_measure} together with the
equivalence of (\ref{inv_measure:intro}) and (\ref{amen:intro}) in Theorem \ref{amen_criterion}.
The implication (\ref{unbalanced:intro}$\Rightarrow$\ref{no_0-ramsey:intro})
is given by Proposition \ref{measure_to_balanced}.

Finally, in order to see the implication (\ref{no_0-ramsey:intro}$\Rightarrow$\ref{non_amen:intro}),
suppose that $G$ is amenable and $A \subseteq G$ is finite.
Let $\epsilon > 0$ be such that if $\Yscr$ is a collection of subsets of $A$ which is $\epsilon$-balanced,
then $\Yscr$ is $\epsilon'$-balanced for all $\epsilon' > 0$.
This is possible since the collection of all families of subsets of $A$ is finite.
Let $B$ be $\epsilon$-Ramsey.
It suffices to prove that $B$ is $\epsilon'$-Ramsey for each $\epsilon' > 0$ since it then follows by
compactness that $B$ is $0$-Ramsey.
Suppose that $E \subseteq B$.
By our assumption on $B$, there is a $\nu \in P(B)$ such that
$A    \nu \subseteq P(B)$ and such that
\[
|g    \nu (E) - g'    \nu(E)| \leq \epsilon
\]
for all $g,g' \in A$.
By Proposition \ref{measure_to_balanced},
\[
\Yscr = \{X_E(g) : (g \in B) \land (A g \subseteq B)\}
\]
is $\epsilon$-balanced.
By assumption this collection is $\epsilon'$-balanced for every $\epsilon' > 0$.
Therefore by Proposition \ref{balanced_to_measure},
there is a $\nu$ in $P(B)$ such that $A    \nu \subseteq P(B)$ and
\[
|g    \nu (E) - g'    \nu(E)| \leq \epsilon'.
\]
for all $g,g' \in A$.
This finishes the proof of the theorem.

\section{Structural Ramsey theory and KPT Theory}

\label{KPT_section}

In this section I will place the results of the present paper into
the context of the theory of Kechris, Pestov, and Todorcevic developed in
\cite{Fraisse_lim_Ramsey} which equates the property of \emph{extreme amenability}
of certain automorphism groups to \emph{structural Ramsey theory}.
First we will need to recall some notation and terminology
from \cite{Fraisse_lim_Ramsey}; further reading can be found there.
A \emph{Fra\"iss\'e structure} is a countable relational structure $\Afrak$
which is \emph{ultrahomogeneous}
--- every finite partial automorphism extends to an automorphism of the whole structure.
If moreover $\Afrak$ includes a relation which is a linear order, then $\Afrak$ is said to be a
\emph{Fra\"iss\'e order structure}.
Some notable examples of such structures are $(\Qbb,\leq)$,
the \emph{random graph}, and \emph{rational Urysohn space}.
If $G$ is a countable group, then we may also associate to $G$ the Fra\"iss\'e structure
$\Gfrak = (G; R_g : g \in G)$
where
\[
R_g = \{ (a,b) \in G^2 : a    b^{-1} = g \}
\]
Observe that the automorphisms of $\Gfrak$ are given by right translation and therefore
$\Aut(\Gfrak) \iso G$.
Since every automorphism of $\Gfrak$ is determined by where is sends the identity,
$\Aut (\Gfrak)$ is discrete as a subgroup of the group of all permutations of $G$ equipped
with the topology of point-wise convergence.

If $\Afrak$ is a Fra\"iss\'e (order) structure, then $\Age(\Afrak)$ is the collection of finite substructures of $\Afrak$.
A collection arising in this way is called a \emph{Fra\"iss\'e (order) class}.
It should be noted that Fra\"iss\'e (order) classes have an intrinsic axiomatization, although this will not
be relevant for the present discussion.

If $\Cscr$ is a Fra\"iss\'e class and $\Bfrak$ and $\Afrak$ are structures in $\Cscr$, then
let $\binomial{\Bfrak}{\Afrak}$ denote the collection of all embeddings of $\Afrak$ into $\Bfrak$.
Define $\Cfrak \rightarrow (\Bfrak)^{\Afrak}_k$ if whenever $f:\binomial{\Cfrak}{\Afrak} \to k$,
there is a $\beta$ in $\binomial{\Cfrak}{\Bfrak}$ such that $f$ is constant on
$\binomial{\beta}{\Afrak} = \{\beta \circ \alpha : \alpha \in \binomial{\Bfrak}{\Afrak}\}$.
A Fra\"iss\'e class $\Cscr$ has the \emph{Ramsey Property} if for every $\Afrak$ and $\Bfrak$ in $\Cscr$,
there is a $\Cfrak$ in $\Cscr$ such that $\Cfrak \rightarrow (\Bfrak)^{\Afrak}_2$.

The main result of \cite{Fraisse_lim_Ramsey}
is that, for a Fra\"iss\'e order structure $\Afrak$,
$\Aut(\Afrak)$ is extremely amenable if
and only if $\Age(\Afrak)$ has the Ramsey Property.
The power of this theorem comes from the rich literature on the Ramsey Property of Fra\"iss\'e classes.
That the finite linear orders form a Ramsey class is just a reformulation of the finite
form of Ramsey's theorem.
More sophisticated examples are the classes of finite ordered graphs \cite{part_fin_rel_set_sys} \cite{ramsey_class_set_sys}, 
finite naturally ordered Boolean algebras \cite{dual_ramsey},
finite ordered metric spaces \cite{metric_ramsey},
and finite dimensional
naturally ordered vector spaces
over a finite field \cite{ramsey_class_cat}.
The branch of mathematics concerned with such results
is known as \emph{structural Ramsey theory}.

If $G$ is a countable group, 
the collection $\Gscr$ of finite substructures of $\Gfrak$ will never form a Ramsey class.
One reason for this is the result of Veech \cite{top_dyn:Veech} asserting that a locally compact group
can never be extremely amenable.
In the case of finitely generated groups, this can be seen explicitly:
the functions $f:\binomial{\Gfrak}{\efrak} \to 2$ defined by
\[
f(g) \equiv d_S(e,g) \mod 2
\]
where $S$ is a generating set for $G$, $d_S$ is the word metric, and $\efrak$ is $\{e\}$ regarded as a
substructure of $\Gfrak$.
(Observe that $\binomial{\Gfrak}{\efrak}$ can naturally be identified with the singletons in $G$.)

We can however modify the Ramsey Property as follows.
Let $\Afrak$ and $\Bfrak$ be finite substructures of a relational structure $\Xfrak$.
Define $\cbinomial{\Bfrak}{\Afrak}$ to be the collection of all finitely supported probability measures
on $\binomial{\Bfrak}{\Afrak}$.
If $f:\binomial{\Bfrak}{\Afrak} \to \Rbb$, then $f$ extends to a linear function defined
on the vector space generated by $\binomial{\Bfrak}{\Afrak}$; this extension will also
be denoted by $f$.
Extending $\circ$ bilinearly, we define
$\binomial{\beta}{\Afrak}$ and $\cbinomial{\beta}{\Afrak}$ when
$\beta$ is in $\cbinomial{\Xfrak}{\Bfrak}$.

Define $\Cfrak \rightarrow \langle \Bfrak \rangle^{\Afrak}_k$ to mean that
whenever $f:\binomial{\Cfrak}{\Afrak} \to k$,
there is a $\beta \in \cbinomial{\Cfrak}{\Bfrak}$ such that
if $\alpha,\alpha' \in \cbinomial{\beta}{\Afrak}$,
\[
|f(\alpha) - f(\alpha')| \leq 1/2.
\]
It follows from the definitions that if $A$ and $B$ are finite subsets of a group $G$,
then $\Bfrak \rightarrow \langle \Afrak \rangle^{\efrak}_2$ is equivalent to asserting
that $B$ is $1/2$-Ramsey with respect to $A$.
Therefore, by Theorem \ref{amen_criterion}  the amenability of $G$
is equivalent to the following \emph{convex Ramsey property} of $\Gscr$:
for every $\Afrak$ and $\Bfrak$ in $\Gscr$ there is a $\Cfrak$ in $\Gscr$ such that
$\Cfrak \rightarrow \langle \Bfrak \rangle^{\Afrak}_2$.
The purpose of the remainder of this section is to prove the following generalization of
Theorem \ref{amen_criterion2} to the setting of automorphism groups of Fra\"iss\'e structures.

\begin{thm}
If $\Xfrak$ is a Fra\"isse structure, then the following are equivalent:
\begin{enumerate}

\item \label{conv_RP_inf}
for every $\Afrak$ and $\Bfrak$ in $\Age (\Xfrak)$,  and every
$f:\binomial{\Xfrak}{\Afrak} \to \{0,1\}$, there is a $\beta$
in $\cbinomial{\Xfrak}{\Bfrak}$ such that for every $\alpha,\alpha' \in \cbinomial{\beta}{\Afrak}$,
$|f(\alpha) - f(\alpha')| \leq 1/2$.

\item \label{conv_RP}
$\Age (\Xfrak)$ satisfies the convex Ramsey property:
for every $\Afrak$ and $\Bfrak$ in $\Age (\Xfrak)$ there is a $\Cfrak$ in $\Age(\Xfrak)$ such
that $\Cfrak \rightarrow \langle \Bfrak \rangle^{\Afrak}_2$.

\item \label{conv_RP_fp}
there is a $p  < 1$ such that
for every $\Afrak$ and $\Bfrak$ in $\Age (\Xfrak)$ there is a $\Cfrak$ in $\Age(\Xfrak)$
such that for every $f:\binomial{\Cfrak}{\Afrak} \to [0,1]$, there is a $\beta$
in $\cbinomial{\Cfrak}{\Bfrak}$ such that for every $\alpha,\alpha' \in \cbinomial{\beta}{\Afrak}$,
\[
|f(\alpha) - f(\alpha')| \leq p.
\]

\item \label{conv_RP_f}
for every $\Afrak$ and $\Bfrak$ in $\Age (\Xfrak)$,  every $\epsilon > 0$,
there is a $\Cfrak$ in $\Age(\Xfrak)$ such that for every
$f:\binomial{\Cfrak}{\Afrak} \to [0,1]$, there is a $\beta$
in $\cbinomial{\Cfrak}{\Bfrak}$ such that for every $\alpha,\alpha' \in \cbinomial{\beta}{\Afrak}$,
\[
|f(\alpha) - f(\alpha')| \leq \epsilon.
\]

\item \label{conv_nRP_f}
for every $\Afrak$ and $\Bfrak$ in $\Age (\Xfrak)$,  every $\epsilon > 0$, and $n$,
there is a $\Cfrak$ in $\Age(\Xfrak)$ such that for every
sequence $f_i$ $(i < n)$ of functions from $\binomial{\Cfrak}{\Afrak}$ to $[0,1]$,
there is a $\beta$ in $\cbinomial{\Cfrak}{\Bfrak}$ such that for every $\alpha$ in
$\cbinomial{\beta}{\Afrak}$,
\[
|f_i(\alpha) - f_i(\alpha')| \leq \epsilon.
\]

\item \label{aut_amen}
$\Aut (\Xfrak)$ is amenable.

\end{enumerate}
\end{thm}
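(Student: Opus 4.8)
The plan is to mirror the proof of Theorem \ref{amen_criterion2}, replacing the group-theoretic objects (translates $g\cdot\nu$, the monoid $H$, measures in $P(H)$) with their Fra\"iss\'e analogues (embeddings $\alpha\colon\Afrak\hookrightarrow\Bfrak$, the composition $\beta\circ\alpha$, and finitely supported measures in $\cbinomial{\Xfrak}{\Bfrak}$). The cycle to establish is $(\ref{aut_amen}\Rightarrow\ref{conv_RP_inf}\Rightarrow\ref{conv_RP}\Rightarrow\ref{conv_RP_fp}\Rightarrow\ref{conv_RP_f}\Rightarrow\ref{conv_nRP_f}\Rightarrow\ref{aut_amen})$, though several of these implications are either trivial or word-for-word translations of arguments already given. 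Concretely: $(\ref{conv_RP_f}\Rightarrow\ref{conv_nRP_f})$ is immediate by iterating $\Cfrak$-construction $n$ times and composing the resulting measures (exactly as in the $\tilde R$ bound of Section \ref{Ramsey_function:sec}); $(\ref{conv_RP_fp}\Rightarrow\ref{conv_RP_f})$ is the "amplification" trick from $(\ref{ramsey_f3/4}\Rightarrow\ref{ramsey_f})$, taking $p^n<\epsilon$ and defining the auxiliary functions $f_i$ by the same affine rescaling; and $(\ref{conv_RP}\Rightarrow\ref{conv_RP_fp})$ is the rounding argument from $(\ref{ramsey}\Rightarrow\ref{ramsey_f3/4})$, setting $E=\{\alpha: f(\alpha)\ge 1/2\}$ and $q=3/4$. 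In each case the only thing to check is that composition of embeddings distributes over convex combinations the way multiplication of point masses does over $\ell^1(G)$, which is exactly the bilinear extension of $\circ$ set up in the paragraph preceding the theorem statement.

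The two implications requiring genuine (if still routine) work are $(\ref{aut_amen}\Rightarrow\ref{conv_RP_inf})$ and $(\ref{conv_nRP_f}\Rightarrow\ref{aut_amen})$. For the forward direction I would use the standard description of $\Aut(\Xfrak)$-amenability via an invariant mean on $\ell^\infty(\Aut(\Xfrak))$, or equivalently an invariant measure on the space of linear orders / the universal minimal flow; given $\Afrak,\Bfrak$ and a coloring $f\colon\binomial{\Xfrak}{\Afrak}\to\{0,1\}$, one picks an embedding $\beta_0\colon\Bfrak\hookrightarrow\Xfrak$, considers the (finitely many) functions $\alpha\mapsto f(\beta_0\circ\alpha)$ as $\alpha$ ranges over $\binomial{\Bfrak}{\Afrak}$, and averages the values $f(g\circ\beta_0\circ\alpha)$ against the invariant mean over $g\in\Aut(\Xfrak)$. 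Ultrahomogeneity guarantees $\Aut(\Xfrak)$ acts on $\binomial{\Xfrak}{\Afrak}$ with the single orbit that makes $\alpha\mapsto f(g\circ\alpha)$ a well-behaved averaging problem, and invariance forces the averaged values over distinct $\alpha\in\binomial{\Bfrak}{\Afrak}$ to agree; reading the invariant mean back as a net of finitely supported measures on $\binomial{\Xfrak}{\Bfrak}$ (by the standard weak-$*$ density of finitely supported measures among means) yields a single $\beta$ with $|f(\alpha)-f(\alpha')|$ as small as desired, in particular $\le 1/2$.

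For $(\ref{conv_nRP_f}\Rightarrow\ref{aut_amen})$ I would run the compactness argument of $(\ref{ramsey_f}\Rightarrow\ref{amenable})$: to build an invariant mean it suffices, for each finite list $g_0,\dots,g_{n-1}\in\Aut(\Xfrak)$ and each $\epsilon>0$, to produce a finitely supported $\mu$ on $\Aut(\Xfrak)$ that is $\epsilon$-invariant under each $g_i$ when tested against the finitely many $\{0,1\}$-valued functions that the $g_i$ "see." Because $\Xfrak$ is a Fra\"iss\'e limit, each $g_i$ is determined by its action on a common finite substructure $\Bfrak$, and the relevant test functions on $\Aut(\Xfrak)$ pull back to functions $f_i\colon\binomial{\Xfrak}{\Afrak}\to[0,1]$ for a suitable $\Afrak\subseteq\Bfrak$. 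Applying (\ref{conv_nRP_f}) with these $f_i$ gives a $\beta\in\cbinomial{\Cfrak}{\Bfrak}$, and an embedding $\Cfrak\hookrightarrow\Xfrak$ together with $\beta$ determines, after the usual translation, the desired finitely supported near-invariant $\mu$; passing to a weak-$*$ cluster point over all choices of $(g_i)_i$ and $\epsilon$ produces the invariant mean.

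The main obstacle I expect is bookkeeping rather than conceptual: carefully identifying, for a given finite list of automorphisms, the common finite substructure $\Bfrak$ of $\Xfrak$ on which they are supported and the sub-substructure $\Afrak$ that carries the test functions, and then verifying that the distributivity of $\circ$ over convex combinations makes "$\beta\circ\alpha$ ranges over a copy of $\cbinomial{\Bfrak}{\Afrak}$" literally true, so that the displayed inequalities transfer back to near-invariance of the associated measure on $\Aut(\Xfrak)$. In other words, the hard part is setting up the dictionary between the combinatorial side ($\binomial{\Cfrak}{\Afrak}$, $\cbinomial{\Cfrak}{\Bfrak}$, composition) and the dynamical side ($\Aut(\Xfrak)$ acting on embeddings, finitely supported means) precisely enough that the five easy implications and the two averaging arguments all go through verbatim from Theorem \ref{amen_criterion2}; once that dictionary is in place, no new idea beyond Hahn--Banach compactness and the F\o lner-style amplification is needed.
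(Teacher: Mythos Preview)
Your overall plan matches the paper's: the cycle is the same, and the implications $(2)\Rightarrow(3)\Rightarrow(4)$ and $(1)\Rightarrow(2)$ are indeed verbatim translations of the corresponding steps in Theorem~\ref{amen_criterion2}. The paper also proves $(4)\Rightarrow(5)$ and the two ``genuine'' implications $(5)\Rightarrow(6)$ and $(6)\Rightarrow(1)$ explicitly, so you have correctly identified where the work lies.

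There is, however, a real gap in your sketch of $(5)\Rightarrow(6)$. First, for a non-discrete Polish group, amenability does \emph{not} mean existence of an invariant mean on $\ell^\infty(\Aut(\Xfrak))$; that is amenability as a discrete group, which is strictly stronger. The paper uses the correct characterization---every continuous action on a compact space admits an invariant Borel probability measure---and this choice is not cosmetic: it is what produces the finite substructure $\Afrak$. Second, your assertion that ``each $g_i$ is determined by its action on a common finite substructure $\Bfrak$'' is false; an automorphism of an infinite structure is never determined by finitely many values. What the paper actually does is fix a compact $\Aut(\Xfrak)$-space $K$ and a point $x_0\in K$, use \emph{continuity of the action} to find a finite $\Afrak$ whose pointwise stabilizer moves every $x\in K$ by less than $\epsilon/2$ as measured by the $f_i$, and then set $B=A\cup\bigcup_i g_i^{-1}(A)$. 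The test functions $\tilde f_i$ on $\binomial{\Xfrak}{\Afrak}$ are then defined via $x_0$, and the $\beta$ produced by $(5)$ is lifted to automorphisms $h_j$ by ultrahomogeneity, yielding $\nu=\sum_j\lambda_j\delta_{h_j^{-1}\cdot x_0}$. Your sketch has the right shape but is missing exactly this continuity step, without which there is no mechanism for reducing to finite substructures.

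A smaller point: in $(6)\Rightarrow(1)$ your averaging argument works, but note that the functions $g\mapsto f(g\circ\beta_0\circ\alpha)$ differ from one another by \emph{right} translation, so you need either a right-invariant mean or (as the paper does) to act on the compact space $2^{\binomial{\Xfrak}{\Afrak}}$ and integrate the evaluation maps $f\mapsto f(\alpha)$ against the resulting invariant measure on the orbit closure of $f_0$.
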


\begin{remark}
The equivalence of (\ref{conv_nRP_f}) and (\ref{aut_amen}) was noticed by
Todor Tsankov, prior to the results of this paper.
I would like to thank him for a helpful conversation in which it became clear
that the above theorem should be true.
\end{remark}

\begin{proof}
I will only prove the implications (\ref{conv_RP_inf}$\Rightarrow$\ref{conv_RP}),
(\ref{conv_RP_f}$\Rightarrow$\ref{conv_nRP_f}), (\ref{conv_nRP_f}$\Rightarrow$\ref{aut_amen}), and
(\ref{aut_amen}$\Rightarrow$\ref{conv_RP_inf}).
The remaining implications are only notationally different from their counterparts in Theorem \ref{amen_criterion2} and
the implications which will be proved will demonstrate how these notational adaptations are made.

To see (\ref{conv_RP_inf}$\Rightarrow$\ref{conv_RP}), we will suppose that (\ref{conv_RP}) is false and
prove that (\ref{conv_RP_inf}) is false.
To this end, let $\Afrak$ and $\Bfrak$ be given.
Let $X$ be the underlying set for the structure $\Xfrak$ and let
$X_n$ $(n < \infty)$ be an increasing sequence of finite sets whose union is $X$.
For each $n$, fix a $f_n:\binomial{\Xfrak_n}{\Afrak} \to 2$ such that there is
no $\beta \in \cbinomial{\Xfrak_n}{\Bfrak}$ such that for all $\alpha,\alpha' \in \cbinomial{\beta}{\Afrak}$,
$|f(\alpha) - f(\alpha')| \leq 1/2$.
Find a subsequence $f_{n_k}$ $(k < \infty)$ such that for every $m$,
if $k,k' \geq m$, then
\[
f_{n_k} \restriction \binomial{\Xfrak_m}{\Afrak} = f_{n_{k'}} \restriction \binomial{\Xfrak_m}{\Afrak}
\]
Define $f:\binomial{\Xfrak}{\Afrak} \to \{0,1\}$ by
$f(\alpha) = f_{n_k}(\alpha)$ whenever the range of $\alpha$ is contained in $\Xfrak_m$ and
$m \leq k$.
If there were a $\beta$ in $\cbinomial{\Xfrak}{\Bfrak}$ such that
for all $\alpha,\alpha' \in \cbinomial{\beta}{\Afrak}$,
$|f(\alpha) - f(\alpha')| \leq 1/2$, then such a $\beta$ would be contained in $\cbinomial{\Xfrak_m}{\Bfrak}$
for some $m$.
Then for any $k > m$, $\beta$ would contradict our choice of $f_{n_k}$.

In order to see the implication (\ref{conv_RP_f}$\Rightarrow$\ref{conv_nRP_f}),
let $\Afrak$, $\Bfrak$, and $\epsilon > 0$ be given.
Construct $\Cfrak_i$ $(i \leq n)$ such that $\Cfrak_0 = \Bfrak$ and for all $i \leq n$,
if $f:\binomial{\Cfrak_i}{\Afrak} \to [0,1]$, there is a $\nu \in \cbinomial{\Cfrak_i}{\Cfrak_{i-1}}$
such that for all $\alpha,\alpha' \in \cbinomial{\nu}{\Afrak}$,
$|f(\alpha)-f(\alpha')| \leq \epsilon$.
Define $\beta_n = \Cfrak_n$ and
construct $\beta_i$ $(i < n)$ by downward induction such that 
$\beta_i$ is in $\cbinomial{\beta_{i+1}}{\Cfrak_i}$ and if $\alpha,\alpha'\in \cbinomial{\beta_i}{\Afrak}$,
then $|f_i(\alpha) - f_i(\alpha')| < \epsilon$.
This is achieved by applying our hypothesis on $\Cfrak_{i+1}$ to the function
$\tilde f_i : \binomial{\Cfrak_{i+1}}{\Afrak} \to [0,1]$ defined by
$\tilde f_i (\alpha) = f_i(\beta_{i+1} \circ \alpha)$.
If $\nu_i \in \cbinomial{\Cfrak_{i+1}}{\Cfrak_i}$ is such that for all $\alpha,\alpha' \in \cbinomial{\Cfrak_{i+1}}{\Afrak}$
\[
|\tilde f_i (\alpha) - \tilde f_i(\alpha')| \leq \epsilon
\]
then $\beta_i = \beta_{i+1} \circ \nu_i$ is as desired.
Since $i < j < n$ implies $\cbinomial{\beta_i}{\Afrak} \subseteq \cbinomial{\beta_j}{\Afrak}$,
we have that $\beta = \beta_0$ satisfies the conclusion of (\ref{conv_nRP_f}).

Next we will prove (\ref{conv_nRP_f}$\Rightarrow$\ref{aut_amen}).
We will use the following characterization of amenability of a topological group:
$G$ is amenable if and only if whenever $G$ acts continuously on a compact space $K$,
$K$ admits a (countably additive) $G$-invariant Borel probability measure.
To this end, fix a continuous action of $\Aut(\Xfrak)$ on a compact space $K$.
Recall that the Borel probability measures form a weak* compact subset of
$C(K)^*$.
Therefore it is sufficient to prove that for every $\epsilon > 0$,
every sequence $f_i$ $(i < n)$ of elements of $C(K)$,
and every sequence $g_i$ $(i < n)$ of elements of $\Aut(\Xfrak)$,
there is a finitely supported measure
$\nu$ on $X$ such that for every $i < n$
\[
|f_i(g_i \cdot \nu))- f_i(\nu)| \leq \epsilon.
\]
Let $f_i$ $(i < n)$ and $g_i$ $(i < n)$ be given and assume without loss of generality that
$f_i$ maps into $[0,1]$.

By the compactness of $K$, there is an open neighborhood
$U$ of $\id_{\Xfrak}$ such that if $g$ is in $U$, then for all $i < n$
\[
|f_i(g \cdot \nu) - f_i(\nu)| \leq \frac{\epsilon}{2}
\]
(Here we have extended the action linearly to an action of $\Aut(\Xfrak)$ on the
finitely supported measures.
Similarly, elements of $C(K)$ are extended linearly to
the finitely supported measures on $K$.)
Therefore there is a finite substructure $\Afrak$ of $\Xfrak$ such that
if $g \restriction \Afrak = \id_{\Afrak}$, then $g$ is in $U$.
Let $\Bfrak$ be the finite substructure of $\Xfrak$ with
domain
\[
B = A \cup \bigcup_{i < n} g_i^{-1} (A).
\]
Let $\Cfrak$ be the finite substructure of $\Xfrak$ which satisfies the conclusion
of (\ref{conv_nRP_f}) with $\epsilon/2$ in place of $\epsilon$.

Fix an element $x_0$ of $K$.
Observe that if $i < n$ and $g$ and $h$ are in $\Aut(\Xfrak)$
are such that $g^{-1} \restriction \Afrak = h^{-1} \restriction \Afrak$,
then
\[
|f_i(g \cdot x_0) - f_i(h \cdot x_0)| \leq \frac{\epsilon}{2}
\]
This is because otherwise $gh^{-1} \in U$ and $x = h \cdot x_0$ would contradict our
choice of $U$.

For each
$f$ in $C(K)$, define
$\tilde f:\binomial{\Xfrak}{\Afrak} \to [0,1]$ by
\[
\tilde f(\alpha) = \inf \{f(h \cdot x_0) : h \in \Aut(\Xfrak) \land h^{-1} \restriction \Afrak = \alpha \}.
\]
By our choice of $\Cfrak$, there is a $\beta$ in $\cbinomial{\Cfrak}{\Bfrak}$ such
that for every $\alpha,\alpha' \in \cbinomial{\beta}{\Afrak}$ and $i < n$,
\[
|\tilde f_i(\alpha) - \tilde f_i(\alpha')| \leq \frac{\epsilon}{2}.
\]
Let $\beta_j$ $(j < m)$ be the elements of $\binomial{\Cfrak}{\Bfrak}$ such that
for some choice of positive $\lambda_j$ $(j < m)$,
$\beta = \sum_{j < m} \lambda_j \beta_j$.
For each $j < m$, fix an $h_j \in \Aut(\Xfrak)$ such that $h_j$ extends $\beta_j$.
This is possible since $\Xfrak$ is ultrahomogeneous.
Finally,
define
\[
\nu = \sum_{j < m} \lambda_j \delta_{h_j^{-1}  \cdot x_0}
\]
where $\delta_x$ denotes the point mass at $x$.
Define $\alpha_i = g_i^{-1} \restriction \Afrak$,
observing that $\alpha_i \in \binomial{\Bfrak}{\Afrak}$. 
Now for each $i < n$,
\[
|f_i(g_i \cdot \nu) - f(\nu)| =
|f_i(g_i \sum_{j < m} \lambda_j \cdot \delta_{h_j^{-1} \cdot x_0}) - f_i(\sum_{j < m} \lambda_j \delta_{h_j^{-1} \cdot x_0})| 
\]
\[
=|f_i(\sum_{j < m} \lambda_j (g_i \circ h_j^{-1}) \cdot \delta_{x_0}) - f_i(\sum_{j < m} \lambda_j h_j^{-1} \cdot \delta_{x_0})|
\]
\[
\leq |\tilde f_i(\beta \circ \alpha_i) - \tilde f_i (\beta \circ \id_{\Afrak})| + \frac{\epsilon}{2} \leq \epsilon
\]
which is what we needed to prove.

Finally, we will prove (\ref{aut_amen}$\Rightarrow$\ref{conv_RP_inf}).
To this end, let $\Afrak$ and $\Bfrak$ be given and let
$f_0:\binomial{\Xfrak}{\Afrak} \to 2$ be arbitrary.
Observe that $2^{\binomial{\Xfrak}{\Afrak}}$ is a compact space and that
$\Aut(\Xfrak)$ acts continuously on $2^{\binomial{\Xfrak}{\Afrak}}$ on the left by
$g \cdot f(\alpha) = f(g \circ \alpha)$.
Let $Z$ denote the orbit of $f_0$ under this action and let  
$K$ denote the closure of $Z$.
Since $\Aut(\Xfrak)$ is amenable, there is an probability measure $\mu$ on $K$
which is invariant under the action.
Since $\mu$ is invariant, $\int  f(\alpha) d \mu (f)$ does not depend on $\alpha \in \binomial{\Xfrak}{\Afrak}$.

Since the collection of all probabilities measures on $K$ whose support is finite and contained
in $Z$ is dense,
there are $\gamma_j$ $(j < m)$ in $\Aut(\Xfrak)$ and positive $\lambda_j$ $(j < m)$ such that
for each $\alpha \in \binomial{\Bfrak}{\Afrak}$
\[
|\sum_{j < m} \lambda_j f_0 (\gamma_j \circ \alpha)  - r| \leq 1/4.
\]
Now define $\beta_j = \gamma_j \restriction \Bfrak$, $\beta = \sum_{j  < m} \lambda_j \beta_j$ and observe
that
if $\alpha,\alpha' \in \cbinomial{\Bfrak}{\Afrak}$
\[
|f_0 (\beta \circ \alpha) - f_0(\beta \circ \alpha')| \leq |f_0(\beta \circ \alpha) - r| + |f_0(\beta \circ \alpha') - r| \leq 1/2.
\]
\end{proof}

\section{Concluding remarks}

The research presented in this article grew out of a study of the amenability problem for
Thompson's group $F$ and the study of its F\o lner function.
A Ramsey theoretic analysis of the amenability problem for $F$
will be published in a separate article \cite{Hind_Ellis_F}.
In \cite{fast_growth_F}, it was demonstrated that there is a constant $C$ such that
the minimum cardinality of a $C^{-n}$-F\o lner set in $F$ (with respect to the standard
generating set) has cardinality at least $2^{2^{2^{\ldots 2}}}$ (a tower of $n$ 2s).
Theorem \ref{amen_criterion}
was proved in part in hopes that the minimum cardinalities of $\frac{1}{2}$-Ramsey sets
for $F$ might grow at a more moderate rate and be easier to construct these sets
by an inductive argument.

I will finish by mentioning an
intriguing problem concerning
which unbalanced sets are required to witness the non amenability of
all non amenable groups.

\begin{problem}
Is there a finite list $\Bscr$ of unbalanced families such that any non amenable group contains an realization of
an isomorphic copy of an element of $\Bscr$?
\end{problem}

Here two unbalanced families are isomorphic if one is the set-wise image of the other under
a bijection of the underlying sets.


\begin{thebibliography}{10}

\bibitem{tiling_curv_amen}
J. Block and S. Weinberger.
\newblock Aperiodic tilings, positive scalar curvature and amenability of
  spaces.
\newblock {\em J. Amer. Math. Soc.}, 5(4):907--918, 1992.

\bibitem{iso_profiles_fg}
A. Erschler.
\newblock On isoperimetric profiles of finitely generated groups.
\newblock {\em Geom. Dedicata}, 100:157--171, 2003.

\bibitem{pw_auto_groups}
A. Erschler.
\newblock Piecewise automatic groups.
\newblock {\em Duke Math. J.}, 134(3):591--613, 2006.

\bibitem{Folner_crit}
E. F{\o}lner.
\newblock On groups with full {B}anach mean value.
\newblock {\em Math. Scand.}, 3:243--254, 1955.

\bibitem{ramsey_class_cat}
R.~L. Graham, K.~Leeb, and B.~L. Rothschild.
\newblock Ramsey's theorem for a class of categories.
\newblock {\em Advances in Math.}, 8:417--433, 1972.

\bibitem{dual_ramsey}
R.~L. Graham and B.~L. Rothschild.
\newblock Ramsey's theorem for {$n$}-parameter sets.
\newblock {\em Trans. Amer. Math. Soc.}, 159:257--292, 1971.

\bibitem{entropy_isoperimetry}
M. Gromov.
\newblock Entropy and isoperimetry for linear and non-linear group actions.
\newblock {\em Groups Geom. Dyn.}, 2(4):499--593, 2008.

\bibitem{Fraisse_lim_Ramsey}
A.~S. Kechris, V.~G. Pestov, and S.~Todorcevic.
\newblock Fra\"\i ss\'e limits, {R}amsey theory, and topological dynamics of
  automorphism groups.
\newblock {\em Geom. Funct. Anal.}, 15(1):106--189, 2005.

\bibitem{fast_growth_F}
J.~Tatch Moore.
\newblock Fast growth in {F}\o lner function for {T}hompson's group $F$.
\newblock ArXiv preprint 0905.1118, Aug. 2009.

\bibitem{Hind_Ellis_F}
J.~Tatch Moore.
\newblock {H}indman's {T}heorem, {E}llis's {L}emma, and {T}hompson's group
  ${F}$.
\newblock ArXiv preprint 1106.4735, June 2011.

\bibitem{Folner:Namioka}
I.~Namioka.
\newblock F\o lner's conditions for amenable semi-groups.
\newblock {\em Math. Scand.}, 15:18--28, 1964.

\bibitem{metric_ramsey}
J. Ne{\v{s}}et{\v{r}}il.
\newblock Metric spaces are {R}amsey.
\newblock {\em European J. Combin.}, 28(1):457--468, 2007.

\bibitem{part_fin_rel_set_sys}
J. Ne{\v{s}}et{\v{r}}il and V. R{\"o}dl.
\newblock Partitions of finite relational and set systems.
\newblock {\em J. Combinatorial Theory Ser. A}, 22(3):289--312, 1977.

\bibitem{ramsey_class_set_sys}
J. Ne{\v{s}}et{\v{r}}il and V. R{\"o}dl.
\newblock Ramsey classes of set systems.
\newblock {\em J. Combin. Theory Ser. A}, 34(2):183--201, 1983.

\bibitem{non_amen_nonF2}
A.~Ju. Ol$'${\v{s}}anski{\u\i}.
\newblock On the question of the existence of an invariant mean on a group.
\newblock {\em Uspekhi Mat. Nauk}, 35(4(214)):199--200, 1980.

\bibitem{non_amen_fp}
A.~Yu. Ol$'$shanskii and Mark~V. Sapir.
\newblock Non-amenable finitely presented torsion-by-cyclic groups.
\newblock {\em Publ. Math. Inst. Hautes \'Etudes Sci.}, (96):43--169 (2003),
  2002.

\bibitem{amenability:Paterson}
A. L.~T. Paterson.
\newblock {\em Amenability}, volume~29 of {\em Mathematical Surveys and
  Monographs}.
\newblock American Mathematical Society, Providence, RI, 1988.

\bibitem{ind_method_min_balanced}
B. Peleg.
\newblock An inductive method for constructing minimal balanced collections of
  finite sets.
\newblock {\em Naval Res. Logist. Quart.}, 12:155--162, 1965.

\bibitem{functional_analysis:Rudin}
W. Rudin.
\newblock {\em Functional analysis}.
\newblock International Series in Pure and Applied Mathematics. McGraw-Hill
  Inc., New York, second edition, 1991.

\bibitem{alg_fass}
A.~Tarski.
\newblock Algebraische fassung des massproblems.
\newblock {\em Fund. Math.}, 31:47--66, 1938.

\bibitem{card_alg}
A.~Tarski.
\newblock {\em Cardinal Algebras}.
\newblock Oxford University Press, 1969.

\bibitem{top_dyn:Veech}
W.~A. Veech.
\newblock Topological dynamics.
\newblock {\em Bull. Amer. Math. Soc.}, 83(5):775--830, 1977.

\bibitem{Banach-Tarski:Wagon}
S. Wagon.
\newblock {\em The {B}anach-{T}arski paradox}.
\newblock Cambridge University Press, Cambridge, 1993.
  
\end{thebibliography}

\def\Dbar{\leavevmode\lower.6ex\hbox to 0pt{\hskip-.23ex \accent"16\hss}D}

\end{document}